\newtheorem{theorem}{Theorem}[section]
\newtheorem{lemma}[theorem]{Lemma}
\newtheorem{hypothesis}[theorem]{Hypothesis}
\theoremstyle{definition}
\newtheorem{definition}[theorem]{Definition}
\theoremstyle{remark}
\numberwithin{equation}{section}
\newcommand{\mmod}[1]{\,\,({\rm{mod}}\,\,#1)}
\def\le{\leqslant} \def\ge{\geqslant}
\begin{document}
\title[Equidistribution of polynomial sequences]{Equidistribution of polynomial sequences in function fields: 
resolution of a conjecture}
\author[J. Champagne]{J. Champagne}
\address{JC, ZG and Y-RL: Department of Pure Mathematics, University of Waterloo, 200 University Avenue West, 
Waterloo, ON, N2L 3G1, Canada}
\email{jchampag@uwaterloo.ca, z4ge@uwaterloo.ca, yrliu@uwaterloo.ca }
\author[Z. Ge]{Z. Ge}
\author[T. H. L\^e]{T. H. L\^e}
\address{THL: Department of Mathematics, University of Mississippi, 305 Hume Hall, University, MS 38677, USA}
\email{leth@olemiss.edu}
\author[Y.-R. Liu]{Y.-R. Liu}
\author[T. D. Wooley]{T. D. Wooley}
\address{TDW: Department of Mathematics, Purdue University, 150 N. University Street, West 
Lafayette, IN 47907-2067, USA}
\email{twooley@purdue.edu}
\subjclass[2020]{11J71, 11T55}
\keywords{Equidistribution, function field analogues}
\thanks{The third author is supported by NSF grant DMS-2246921 and a travel award from the Simons 
Foundation, the fourth author is supported by an NSERC discovery grant, while the fifth author is supported by NSF 
grant DMS-2502625 and Simons Fellowship in Mathematics SFM-00011955. The fifth author is grateful to the 
Institute for Advanced Study, Princeton, for hosting his sabbatical, during which period this paper was completed.}

\begin{abstract} Let $\mathbb F_q$ be the finite field of $q$ elements having characteristic $p$, and denote by 
$\mathbb K_\infty=\mathbb F_q((1/t))$ the field of formal Laurent series in $1/t$. We consider the equidistribution in 
$\mathbb T=\mathbb K_\infty/\mathbb F_q[t]$ of the values of polynomials $f(u)\in \mathbb K_\infty [u]$ as $u$ 
varies over $\mathbb F_q[t]$. Let $\mathcal K$ be a finite set of positive integers, and suppose that 
$\alpha_r\in \mathbb K_\infty$ for $r\in \mathcal K\cup \{0\}$. We show that the polynomial 
$\sum_{r\in \mathcal K\cup\{0\}}\alpha_ru^r$ is equidistributed in $\mathbb T$ whenever $\alpha_k$ is irrational for 
some $k\in \mathcal K$ satisfying $p\nmid k$, and also $p^vk\not\in \mathcal K$ for any positive integer $v$. This 
conclusion resolves in full a conjecture made jointly by the third, fourth and fifth authors.
\end{abstract}

\maketitle

\section{Introduction}
Our focus in this paper lies on an analogue for function fields of the equidistribution theory initiated by Weyl 
\cite{Wey1916}. Write $\mathbb Z^+$ for the set of positive integers and $\{ a\}$ for the fractional part of a real 
number $a$, by which we mean $a-\lfloor a\rfloor$, where $\lfloor a\rfloor$ denotes the largest integer not 
exceeding $a$. We recall that in the classical setting, a sequence $(s_n)_{n=1}^\infty$ of real numbers is said to be 
{\it equidistributed modulo $1$} if, for any interval $[\alpha,\beta]\subset [0,1)$, we have
\[
\lim_{N\rightarrow \infty}N^{-1}\text{card}\{ n\in [1,N]\cap \mathbb Z^+:\{ s_n\}\in [\alpha,\beta]\}=\beta-\alpha .
\]
The celebrated observation of Weyl is that whenever $f(u)=\sum_{r=0}^k\alpha_ru^r$ is a polynomial with real 
coefficients in which one, at least, of the coefficients $\alpha_1,\ldots ,\alpha_r$ is irrational, then the sequence 
$(f(n))_{n=1}^\infty $ is equidistributed modulo $1$. Although an analogue of this conclusion in the function field 
setting fails in general, in this paper we are able to derive a conclusion that is the best possible analogue involving 
an irrationality hypothesis on only one coefficient.\par

In order to describe our new conclusions, we require some notational infrastructure. Let $\mathbb F_q$ denote the 
finite field of $q$ elements whose characteristic is $p$, and suppose that $q=p^m$ throughout. Let 
$\mathbb K=\mathbb F_q(t)$ be the field of fractions of the associated polynomial ring $\mathbb F_q[t]$. Given 
$f/g\in \mathbb K$, with $f,g\in \mathbb F_q[t]$ and $g\ne 0$, we define the norm $|f/g|=q^{\deg f-\deg g}$, with 
the convention that $\deg 0=-\infty$. The completion of $\mathbb K$ with respect to this norm is 
$\mathbb K_\infty=\mathbb F_q((1/t))$, the field of formal Laurent series in $1/t$. Thus, the elements $\alpha$ of 
$\mathbb K_\infty$ can be written in the form $\alpha=\sum_{i=-\infty}^na_it^i$ for some $n\in \mathbb Z^+$ and 
$a_i\in \mathbb F_q$ $(i\le n)$. We may define $\text{ord}\, \alpha$ for $\alpha\in \mathbb K_\infty$ by putting 
$\text{ord}\, 0=-\infty$, and otherwise
\[
\text{ord}\biggl( \sum_{i=-\infty}^na_it^i\biggr) =\max \{ i\in \mathbb Z:a_i\ne 0\} .
\]
One then has the norm on $\mathbb K_\infty$, given by $|\alpha|=q^{\text{ord}\,\alpha}$, that coincides for 
$\alpha\in \mathbb K$ with the norm that we defined earlier. We note in passing that the coefficient $a_{-1}$ here is 
called the {\it residue} of $\alpha$, denoted by $\text{res}(\alpha)$. Thus, when $n\in \mathbb Z^+$, we have
\begin{equation}\label{1.1}
\text{res}\biggl( \sum_{i=-\infty}^na_it^i\biggr) =a_{-1}.
\end{equation}

\par As will be familiar to conversant readers, the function field analogues of $\mathbb Z$, $\mathbb Q$ and 
$\mathbb R$ are respectively $\mathbb F_q[t]$, $\mathbb K$ and $\mathbb K_\infty$. Let
\[
\mathbb T=\{ \alpha\in \mathbb K_\infty:\text{ord}\, \alpha<0\}.
\]
This compact subgroup of $\mathbb K_\infty$ is the analogue of the unit interval $[0,1)$ in the classical setting. 
When $\alpha=\sum_{i=-\infty}^na_it^i\in \mathbb K_\infty$ with $n\in \mathbb Z^+$, we define
\[
\{ \alpha\}=\sum_{i\le -1}a_it^i\in \mathbb T
\]
to be the {\it fractional part} of $\alpha$. Let $\mu$ be a normalized Haar measure on $\mathbb T$ such that 
$\mu(\mathbb T)=1$. A function field analogue of equidistribution modulo $1$ was introduced by Carlitz 
\cite[\S4]{Car1952}.
\begin{definition}\label{definition1.1}
Let $f$ be a $\mathbb K_\infty$-valued function defined over $\mathbb F_q[t]$. We say that 
$(f(u))_{u\in \mathbb F_q[t]}$ is {\it equidistributed in $\mathbb T$} if, for every open ball 
$\mathcal B\subset \mathbb T$, one has
\[
\lim_{N\rightarrow \infty}q^{-N}\text{card}\{ u\in \mathbb F_q[t]: \text{$\text{ord}\,u<N$ and $\{f(u)\}\in \mathcal B$}
\}=\mu(\mathcal B).
\]
\end{definition}

 An alternative definition, involving {\it cylinder sets}, is given in \cite[Definition~1.1]{LLW2025}. A moment of 
reflection will reveal that this definition is equivalent to Definition \ref{definition1.1} above.\par

\begin{definition}\label{definition1.2}
Let $f$ be a $\mathbb K_\infty$-valued function defined over $\mathbb F_q[t]$. When
$(f(mu+s))_{u\in \mathbb F_q[t]}$ is equidistributed in $\mathbb{T}$ for all $m,s\in \mathbb{F}_q[t]$ with $m\neq 0$, 
we say that $(f(u))_{u\in \mathbb{F}_q[t]}$ is {\it totally equidistributed in $\mathbb T$}.
\end{definition}

The notion of total equidistribution can be found, for instance, in \cite[Sec.~1.1.1]{Tao2012}.
It is immediate that total equidistribution is stronger than  equidistribution.\par

The problem of finding a proper function field analogue of Weyl's equidistribution theorem for polynomials is the 
main subject of this paper, and is a problem that was first considered by Carlitz \cite{Car1952} in 1952. We recall that 
an element $\alpha\in \mathbb K_\infty$ is called {\it irrational} when $\alpha\not\in \mathbb K$. In 
\cite[Theorem 12]{Car1952}, Carlitz shows that when $f\in \mathbb K_\infty[x]$ has degree less than $p$, and 
$f(x)-f(0)$ has an irrational coefficient, then $(f(u))_{u\in \mathbb F_q[t]}$ is equidistributed in $\mathbb T$. This 
conclusion was subsequently recovered by Dijksma \cite{Dij1970} with a stronger notion of equidistribution. As 
remarked by Carlitz himself, the condition $\deg f<p$ cannot be removed, as there exist uncountably many irrational 
elements $\alpha \in \mathbb K_\infty$ having the property that $(\alpha u^p)_{u\in \mathbb F_q[t]}$ is not 
equidistributed in $\mathbb T$ (see also the discussion of \cite[Example 1.2]{LLW2025}). However, the third, fourth 
and fifth authors have obtained equidistribution results that address many situations in which $\deg f\ge p$ (see 
\cite[Theorem 1.4 and Proposition 5.2]{LLW2025}). These conclusions make progress towards a conjecture made by 
these authors (see \cite[Conjecture 1.3]{LLW2025}) that, if true, would be the best possible result that could hold in 
which an irrationality hypothesis is imposed on a single coefficient. Our main goal in this paper is to resolve this 
conjecture in full.

\begin{theorem}\label{theorem1.2} Let $\mathcal K$ be a finite set of positive integers, suppose that 
$\alpha_r\in \mathbb K_\infty$ for $r\in \mathcal K\cup \{ 0\}$, and define
\[
f(x)=\sum_{r\in \mathcal K\cup \{0\}}\alpha_rx^r.
\]
Suppose that $\alpha_k$ is irrational for some $k\in \mathcal K$ satisfying $p\nmid k$ and furthermore 
$p^vk\not \in \mathcal K$ for any $v\in \mathbb Z^+$. Then the sequence $(f(u))_{u\in \mathbb F_q[t]}$ is totally
equidistributed in $\mathbb T$.
\end{theorem}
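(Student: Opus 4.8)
The plan is to use the Weyl criterion for equidistribution in $\mathbb T$: the sequence $(f(u))_{u\in\mathbb F_q[t]}$ is equidistributed in $\mathbb T$ if and only if, for every nontrivial additive character $\psi$ of $\mathbb T$ (equivalently, for every nonzero $\beta\in\mathbb F_q[t]$, using the standard pairing $e(\alpha)=\psi_0(\mathrm{res}(\alpha))$ composed with multiplication by $\beta$), the averaged exponential sums
\[
q^{-N}\sum_{\mathrm{ord}\,u<N} e\bigl(\beta f(u)\bigr)
\]
tend to zero as $N\to\infty$. So the whole problem reduces to producing a Weyl-type estimate for these polynomial exponential sums, and then the main work is to extract the needed cancellation from the single irrational coefficient $\alpha_k$.

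The key steps, in order, are as follows. First I would set up the Weyl criterion in the function field setting precisely, reducing to bounding $S_N(\beta)=\sum_{\mathrm{ord}\,u<N}e(\beta f(u))$ with $\beta\ne 0$ fixed. Second, I would carry out \emph{Weyl differencing} adapted to function fields: replacing $u$ by $u+h$ and subtracting reduces the degree in $u$ one step at a time, but in characteristic $p$ one must be careful — differencing a monomial $\alpha_r x^r$ with $p\mid r$ produces only lower-degree \emph{and} possibly another term with exponent divisible by $p$, whereas differencing when $p\nmid r$ genuinely lowers the top degree to $r-1$ with leading coefficient a nonzero multiple of $\alpha_r$. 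The hypothesis $p\nmid k$ is exactly what lets the differencing "bite" on the term $\alpha_k x^k$. Third — and this is where the second hypothesis $p^v k\notin\mathcal K$ enters — after iterated differencing I must ensure that the contribution of $\alpha_k$ is not cancelled or absorbed by contributions coming from other monomials $\alpha_r x^r$ with $r\in\mathcal K$; the condition that no $r\in\mathcal K$ equals $p^v k$ guarantees that, in the relevant linearized/differenced form, the $\alpha_k$-coefficient sits in a "frequency slot" untouched by the rational coefficients $\alpha_r$ attached to exponents $p^v k$, so its irrationality forces genuine cancellation. Concretely, I expect this to be packaged as: after $k-1$ (or suitably many) differencing steps one is left with a sum over a box whose summand is $e(\beta c h_1\cdots h_{k-1}\,\alpha_k\, u + (\text{lower order}))$ with $c\in\mathbb F_q^\times$, and one uses the irrationality of $\alpha_k$ (together with a counting argument on how often $\beta c h_1\cdots h_{k-1}$ can be "too close" to making $\alpha_k$ behave rationally) to beat the trivial bound.

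For the final step I would assemble the pieces: the iterated differencing gives $|S_N(\beta)|^{2^{k-1}} \ll q^{(2^{k-1}-1)N}\sum_{\mathbf h} |T_N(\mathbf h;\beta)|$ where $T_N$ is a genuinely \emph{linear} exponential sum in $u$, which is either $q^N$ (when the linear coefficient's fractional part is small) or $0$ (otherwise), by the standard geometric-series evaluation in $\mathbb F_q[t]$. Then the main estimate becomes a count of how many $\mathbf h$ in the box force the fractional part of the linear coefficient $\beta\cdot(\text{product of }h_i)\cdot(\text{integer combination of }\alpha_k\text{ and the }\alpha_{p^vk})$ to lie in a small ball; the hypotheses on $k$ ensure this "bad set" of $\mathbf h$ has density $o(1)$ as $N\to\infty$, which is where irrationality of $\alpha_k$ is used in its sharpest form (a pigeonhole/continued-fraction-in-$\mathbb F_q[t]$ style argument). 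This should be compared with, and likely refine, \cite[Theorem 1.4 and Proposition 5.2]{LLW2025}, the new input being precisely the careful bookkeeping that isolates $\alpha_k$ from the exponents $p^v k$. The main obstacle I anticipate is exactly this last bookkeeping: controlling the interaction in characteristic $p$ between the differencing operator and monomials whose exponents are $p$-power multiples of $k$, and showing the second hypothesis is both necessary and sufficient to rule out the degenerate cancellation — in the classical setting there is nothing analogous, so the combinatorics of $p$-adic valuations of exponents is the genuinely new and delicate ingredient.
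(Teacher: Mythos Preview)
Your reduction to Weyl's criterion is correct and matches the paper's first move, but the core strategy---iterated Weyl differencing until a linear phase in $u$ emerges---does not go through in characteristic $p$ once $k\ge p$ (or once $\mathcal K$ contains any exponent $\ge p$). After $k-1$ differencing steps applied to $\alpha_k x^k$, the coefficient of the surviving linear term is $k!\,h_1\cdots h_{k-1}\,\alpha_k$, and $k!\equiv 0\pmod p$ as soon as $k\ge p$. More generally, the pattern of surviving terms under $\Delta_{h_1}\cdots\Delta_{h_d}$ is governed by Lucas' theorem, so you cannot count on arriving at a clean linear phase $c\,h_1\cdots h_{k-1}\,\alpha_k\,u$; the process may collapse to a constant before a usable linear term ever appears (for instance $x^3$ in characteristic $2$ differences twice to a constant in $x$). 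This is not a technicality you can patch with ``suitably many'' steps: it is precisely why Carlitz's original argument was restricted to $\deg f<p$, and the paper remarks explicitly that Weyl/van~der~Corput differencing ``shares the same defect \ldots\ in that the number of differencing steps taken is limited by this characteristic.'' Your bookkeeping about how $p^vk\notin\mathcal K$ prevents interference under differencing is also off target: that hypothesis is not about the differencing operator at all.

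The paper's proof avoids differencing entirely. Using the identity $e(\alpha\xi^p)=e(\psi(\alpha)\xi)$ (Lemma~\ref{lemma2.3}), it collapses every additive piece $A_r(x^r)$ of $f$ to a monomial $\tau(A_r)x^r$ with $r$ coprime to $p$, so that $e(f(u))=e(g(u))$ for a polynomial $g$ supported only on exponents coprime to $p$ (Lemma~\ref{lemma3.2}). The hypothesis $p^vk\notin\mathcal K$ for all $v\ge 1$ is used exactly here: it forces $A_k(x)=\alpha_k x$, so $\tau(hA_k)=h\alpha_k$ is irrational for every nonzero $h\in\mathbb F_q[t]$. One then invokes the prior result of \cite{LLW2025} (Theorem~\ref{theorem4.1} here, proved via Vinogradov's mean value theorem and the large sieve rather than differencing), which handles precisely the case of a polynomial whose support consists of exponents coprime to $p$ with one irrational coefficient. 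So the role of the two hypotheses on $k$ is structural (isolating a linear additive component under the $\psi$-reduction), not combinatorial bookkeeping under differencing.
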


We note that, in the original version of this paper, our Theorem 1.3 was stated with the classical equidistribution 
conclusion in place of the total equidistribution in the current version. This stronger result was presented in a note of 
Ackelsberg and Bergelson \cite[Theorem 3.5]{AB2026} that appeared on the arXiv following the original version of 
this paper. Since the proof of the stronger statement is easily accomplished within our original proof, we have 
modified the statement of Theorem \ref{theorem1.2} to better align with \cite[Theorem 3.5]{AB2026}. See also 
Lemmata 3.3.2 and 3.3.3 of the first author's thesis \cite{Cha2026}, for closely related results.\par

A related though less definitive conclusion is obtained in \cite[Theorem 1.4]{LLW2025}. In order to describe this 
result, given a set of positive integers $\mathcal K$, we introduce the {\it shadow} of $\mathcal K$, defined to be the 
set
\begin{equation}\label{def:shadow}
\mathcal S(\mathcal K)=\left\{ j\in \mathbb Z^+:\text{$p\nmid \binom{r}{j}$ for some $r\in \mathcal K$}\right\}.
\end{equation}
Notice that $\mathcal K\subseteq \mathcal S(\mathcal K)$ for all sets $\mathcal K$ of positive integers. The 
conclusion of \cite[Theorem 1.4]{LLW2025} is similar to that of Theorem \ref{theorem1.2} above, save that the 
condition  $p^vk\not\in \mathcal K$ for any $v\in \mathbb Z^+$, in the latter, is replaced by the more stringent 
hypothesis $p^vk\not\in \mathcal S(\mathcal K)$ for any $v\in \mathbb Z^+$.\par

A conclusion slightly more general than that of \cite[Theorem 1.4]{LLW2025} is obtained in 
\cite[Proposition 5.2]{LLW2025}. This too falls short of establishing \cite[Conjecture 1.3]{LLW2025}, now established 
in Theorem \ref{theorem1.2} above. Consider again a set $\mathcal K$ of positive integers. In order to describe 
\cite[Proposition 5.2]{LLW2025}, we begin by defining the set
\[
\mathcal K^*=\{ k\in \mathcal K:\text{$p\nmid k$ and $p^vk\not \in \mathcal S(\mathcal K)$ for any 
$v\in \mathbb Z^+$}\}.
\]
We now put $\mathcal K_0=\mathcal K$, and inductively define for each $n\ge 1$ the set
\[
\mathcal K_n=\mathcal K_{n-1}\setminus \mathcal K_{n-1}^*.
\]
We then define the set of indices
\begin{equation}\label{1.2}
{\widetilde{\mathcal K}}=\bigcup_{n=0}^\infty\, \mathcal K_n^*.
\end{equation}
With this notation, the conclusion of \cite[Proposition 5.2]{LLW2025} shows that the conclusion of Theorem 
\ref{theorem1.2} holds whenever $\alpha_k$ is irrational for some $k\in {\widetilde{\mathcal K}}$. As we have noted 
already, this condition is again more stringent than that imposed in Theorem \ref{theorem1.2}.\par

We remark that, with little additional effort, the improvement of \cite[Theorem 1.4]{LLW2025} embodied in Theorem 
\ref{theorem1.2} can be imported also into the conclusions of \cite[Theorems 6.3 and 7.3]{LLW2025}. We leave to the 
reader the pedestrian details of confirming such enhancements.\par 

The conclusion of Theorem \ref{theorem1.2} has been claimed in recent work of Ackelsberg 
\cite[Theorem 1.2]{Ack2025}, based on earlier work of Bergelson and Leibman \cite[Theorem 0.3]{BL2016}. The latter 
work is based on a variant of van der Corput differencing. When working in rings of positive characteristic $p$, van 
der Corput differencing shares the same defect as Weyl differencing, in that the number of differencing steps taken 
is limited by this characteristic, potentially preventing the exponential sum under discussion from being bounded 
non-trivially when it contains monomials of degree larger than $p$. Although Bergelson and Leibman take care to 
work around the obstacles imposed by this well-known observation, there is an infelicity in the ``only if'' direction of 
\cite[Theorem 0.3]{BL2016}, for which we provide a counterexample in this note. This issue is addressed, with 
corrected statements, in Ackelsberg and Bergelson \cite[\S 3.1]{AB2026}, so that Ackelsberg's work \cite{Ack2025} is 
ultimately not impaired (see the subsequently published revision \cite{Ack2026}). Our own proof avoids certain 
difficulties associated with Weyl and van der Corput differencing by instead making use of ideas based on 
Vinogradov's mean value theorem and the large sieve inequality, as described in \cite{LLW2025}. We stress that the 
method presented here has advantages over that of Bergelson and Leibman. First, in contrast to the notion of 
well-distributed sequences in \cite{Ack2025} and \cite{BL2016}, which is defined in terms of F{\o}lner sequences, our 
method yields equidistribution for a broad class of sequences. For instance, 
$(f(x))_{x\equiv \pm 1 \,(\mathrm{mod}\, m)}$ and $(f(x))_{x\ \textup{irreducible}}$ fall outside the scope of 
Ackelsberg's \cite[Theorem 1.2]{Ack2025}. The former follows immediately from our Theorem~\ref{theorem1.2} and 
Ackelsberg and Bergelson \cite[Theorem 3.5]{AB2026}, while the equidistribution theory of the latter can also be 
treated by our method in ongoing work. Secondly, our method yields quantitative bounds for exponential sums, as 
stated in Theorem~\ref{theorem4.3}. Such estimates can be utilised in many other problems over $\mathbb{F}_q[t]$, 
including Diophantine approximation, Waring's problem, and additive combinatorics. See, for example, S.~Yamagishi 
\cite{Yam2016a, Yam2016b} and Z.~Ge \cite{Ge2026}.\par

Our paper is organised as follows. Having recalled some basic features of the harmonic analysis of function fields in 
\S2, paying close attention to the role of the Frobenius map, we explore implications for additive polynomials in \S3. 
This provides the groundwork for our proof of Theorem \ref{theorem1.2} in \S4. We conclude in \S5 by providing a 
counter-example to a claim in the conclusion of \cite[Theorem 1.2]{Ack2025} that we mentioned in the previous 
paragraph.\par 

The work presented in this paper was the result of a zoom collaboration initiated in mid-2023.

\section{Preliminaries} We begin our deliberations with an account of certain features of the analysis of function 
fields playing a role in our discussion of Theorem \ref{theorem1.2}. First, in order to describe an analogue of Weyl's 
criterion for equidistribution in the function field setting, we introduce some additional notation. Recall that we 
assume $q=p^m$. Let ${\rm{tr}}:\mathbb F_q\rightarrow \mathbb F_p$ denote the familiar trace map, given 
explicitly by
\[
{\rm{tr}}(a)=a+a^p+a^{p^2}+\ldots +a^{p^{m-1}}.
\]
We note that this map is additive, in the sense that ${\rm{tr}}(a+b)={\rm{tr}}(a)+{\rm {tr}}(b)$ whenever 
$a,b\in \mathbb F_q$. We next define the additive character $e_q:\mathbb F_q\rightarrow \mathbb C^\times$ by 
taking $e_q(a)=e^{2\pi {\rm i}{\rm{tr}}(a)/p}$. This character induces a map, which we denote by $e(\cdot)$, from 
$\mathbb K_\infty$ to $\mathbb C^\times$, defined for each $\alpha\in \mathbb K_\infty$ by putting
\[
e(\alpha)=e_q({\rm{res}}\, \alpha),
\]
where ${\rm{res}}\, \alpha$ is defined via \eqref{1.1}. One can check that 
$e:\mathbb K_\infty \rightarrow \mathbb C^\times$ is a non-trivial continuous additive character on 
$\mathbb K_\infty$. The following analogue of Weyl's criterion for equidistribution was introduced by Carlitz.

\begin{theorem}\label{theorem2.1} The sequence $(a_u)_{u\in \mathbb F_q[t]}\subset \mathbb K_\infty$ is 
equidistributed in $\mathbb  T$ if and only if for any $h\in \mathbb F_q[t]\setminus \{ 0\}$, we have
\[
\lim_{N\rightarrow \infty}q^{-N}\biggl| \sum_{\substack{u\in \mathbb F_q[t]\\ {\rm{ord}}\, u<N}}e(ha_u)\biggr|=0.
\]
\end{theorem}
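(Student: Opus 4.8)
The plan is to transplant the classical proof of Weyl's criterion to the non-archimedean setting, using the Pontryagin duality between the compact group $\mathbb T$ and the discrete group $\mathbb F_q[t]$ furnished by the pairing $(h,\alpha)\mapsto e(h\alpha)$. The first task is to assemble the requisite harmonic analysis on $\mathbb T$. For each integer $N\ge 0$ set $\mathbb T_N=\{\alpha\in\mathbb T:{\rm ord}\,\alpha<-N\}$, a compact open subgroup of $\mathbb T$ with $\mathbb T_0=\mathbb T$ and $[\mathbb T:\mathbb T_N]=q^N$; translation invariance of $\mu$ then yields $\mu(\gamma+\mathbb T_N)=q^{-N}$ for every $\gamma\in\mathbb T$, and one checks that the open balls of $\mathbb T$ are precisely the cosets $\gamma+\mathbb T_N$ with $\gamma\in\mathbb T$ and $N\ge 0$. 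A short computation from \eqref{1.1} shows that the character $\alpha\mapsto e(h\alpha)$ is trivial on $\mathbb T_N$ if and only if $\deg h<N$; since there are exactly $q^N=[\mathbb T:\mathbb T_N]$ such $h$ and distinct characters are orthonormal on $\mathbb T$, these characters form an orthonormal basis for the finite-dimensional space of $\mathbb T_N$-invariant functions $\mathbb T\to\mathbb C$. As $\mathbf 1_{\gamma+\mathbb T_N}$ lies in that space, we may write
\[
\mathbf 1_{\gamma+\mathbb T_N}(\alpha)=\sum_{\deg h<N}c_h\, e(h\alpha),\qquad c_h=\int_{\mathbb T}\mathbf 1_{\gamma+\mathbb T_N}(\beta)\,\overline{e(h\beta)}\,{\rm d}\mu(\beta),
\]
with in particular $c_0=\mu(\gamma+\mathbb T_N)$. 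Finally, for $h\in\mathbb F_q[t]$ and $\alpha\in\mathbb K_\infty$ we have $e(h\alpha)=e(h\{\alpha\})$, since $h$ times the polynomial part of $\alpha$ is again a polynomial and hence has zero residue.

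Granting these facts, necessity is almost immediate. Fix $h\in\mathbb F_q[t]\setminus\{0\}$; since $e(h\cdot)$ is invariant under $\mathbb T_N$ with $N=\deg h+1$, it is a finite $\mathbb C$-linear combination of indicator functions $\mathbf 1_{\gamma+\mathbb T_N}$. Using the identity $e(ha_u)=e(h\{a_u\})$, summing over $u$ with ${\rm ord}\,u<N'$, dividing by $q^{N'}$ and letting $N'\to\infty$, Definition \ref{definition1.1} shows that $q^{-N'}\sum_{{\rm ord}\,u<N'}e(ha_u)$ converges to the corresponding linear combination of the numbers $\mu(\gamma+\mathbb T_N)$, which is exactly $\int_{\mathbb T}e(h\beta)\,{\rm d}\mu(\beta)$. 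This integral vanishes because, by the triviality criterion above, $e(h\cdot)$ is a non-trivial character of $\mathbb T$ whenever $h\ne 0$.

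For sufficiency, suppose all the exponential sums tend to $0$, and let $\mathcal B=\gamma+\mathbb T_N$ be an open ball. Combining the expansion of $\mathbf 1_{\mathcal B}$ with the identity $e(ha_u)=e(h\{a_u\})$ gives
\[
q^{-N'}\,{\rm card}\{u\in\mathbb F_q[t]:{\rm ord}\,u<N',\ \{a_u\}\in\mathcal B\}=\sum_{\deg h<N}c_h\Biggl(q^{-N'}\sum_{\substack{u\in\mathbb F_q[t]\\ {\rm ord}\,u<N'}}e(ha_u)\Biggr).
\]
Letting $N'\to\infty$, each term with $h\ne 0$ tends to $0$ by hypothesis while the $h=0$ term contributes $c_0=\mu(\mathcal B)$, so the left-hand side tends to $\mu(\mathcal B)$. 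This is precisely equidistribution in the sense of Definition \ref{definition1.1}.

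The only point that uses the specific structure of $\mathbb T$, as opposed to being a verbatim transcription of the classical argument, is the identification of the characters $e(h\cdot)$ with $\deg h<N$ as exactly those characters of $\mathbb T$ trivial on $\mathbb T_N$, together with the matching count $q^N=[\mathbb T:\mathbb T_N]$; it is this that makes the Fourier expansion of a ball indicator available and pins the coefficient of the trivial character to $\mu(\mathcal B)$. In view of the material on residues and on the character $e(\cdot)$ recalled in this section, that verification is routine, and no serious obstacle remains.
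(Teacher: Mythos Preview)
Your argument is correct. The paper itself does not give a proof of this statement at all: its entire proof reads ``This is Carlitz \cite[Theorem 4]{Car1952}.'' So there is no approach to compare against beyond noting that you have supplied a self-contained argument where the authors defer to the literature.

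What you have written is essentially the standard proof of Weyl's criterion on a compact abelian group, specialized to $\mathbb T$. The key structural input---that the characters $\alpha\mapsto e(h\alpha)$ with $\deg h<N$ are precisely the characters of $\mathbb T$ trivial on $\mathbb T_N$, and that their number matches $[\mathbb T:\mathbb T_N]=q^N$---is exactly what makes the indicator of any ball a \emph{finite} linear combination of characters, so that no approximation (Stone--Weierstrass or otherwise) is needed in either direction. Your verification of this point, via the residue computation and the observation that the image of $\mathbb T_N$ under $\alpha\mapsto\mathrm{res}(h\alpha)$ is an $\mathbb F_q$-subspace, is routine but correct. The remaining steps (orthogonality of characters, the identity $e(ha_u)=e(h\{a_u\})$, and reading off $c_0=\mu(\mathcal B)$) are all sound.
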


\begin{proof} This is Carlitz \cite[Theorem 4]{Car1952}.
\end{proof}

We end this section by recording a fundamental property of continuous additive characters on $\mathbb K_\infty$. 
This is, in fact, a consequence of a more general result concerning the self-duality of local fields as topological 
groups.

\begin{theorem}\label{theorem2.2} Let $\chi:\mathbb K_\infty\rightarrow \mathbb C^\times$ be a continuous 
additive character on $\mathbb K_\infty$. Then there exists a unique $\eta\in \mathbb K_\infty$ such that, for all 
$\xi \in \mathbb K_\infty$, one has $\chi(\xi)=e(\eta \xi)$.
\end{theorem}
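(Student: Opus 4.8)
The plan is to prove Theorem \ref{theorem2.2}, the self-duality statement: every continuous additive character $\chi$ on $\mathbb K_\infty$ has the form $\chi(\xi)=e(\eta\xi)$ for a unique $\eta\in\mathbb K_\infty$. First I would verify \emph{uniqueness}, which is the easy direction. If $e(\eta\xi)=e(\eta'\xi)$ for all $\xi$, then setting $\delta=\eta-\eta'$ we get $e(\delta\xi)=1$ for all $\xi\in\mathbb K_\infty$; I claim this forces $\delta=0$. Indeed, if $\delta\ne 0$ with $\operatorname{ord}\delta=d$, then choosing $\xi=ct^{-d-1}$ for a suitable $c\in\mathbb F_q^\times$ makes $\operatorname{res}(\delta\xi)$ any prescribed element of $\mathbb F_q$ (the leading coefficient of $\delta$ being nonzero), and since $e_q$ is a nontrivial character on $\mathbb F_q$ we can pick $c$ so that $e(\delta\xi)\ne 1$, a contradiction.

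For \emph{existence}, the strategy is to reconstruct $\eta$ coefficient by coefficient from the values of $\chi$ on the basis elements $t^j$, $j\in\mathbb Z$. First I would use continuity of $\chi$ together with the fact that $\mathbb T=\{\operatorname{ord}\alpha<0\}$ is a compact (profinite) group: the restriction of $\chi$ to $\mathbb T$ is a character of a profinite group, hence has open kernel, so $\chi$ is trivial on $t^{-n}\mathbb F_q[[1/t]]\cap\mathbb T$ for some $n$, i.e.\ $\chi(t^j)=1$ for all $j\le -n-1$. Next, for each $j\in\mathbb Z$, the map $a\mapsto\chi(at^j)$ is a character of the additive group $\mathbb F_q$, so by the nondegeneracy of the trace pairing there is a unique $b_j\in\mathbb F_q$ with $\chi(at^j)=e_q(\operatorname{tr}\text{-pairing})=e_q(ab_j)$ for all $a\in\mathbb F_q$; concretely $\chi(at^j)=e^{2\pi i\,\operatorname{tr}(ab_j)/p}$. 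The previous step shows $b_j=0$ for $j\le -n-1$, so we may define $\eta=\sum_{j\le n}b_{-1-j}\,t^{j}$... more carefully, we want $\operatorname{res}(\eta t^j)=b_j$, which means the coefficient of $t^{-1-j}$ in $\eta$ should be $b_j$; so set $\eta=\sum_{j}b_j\,t^{-1-j}$, a sum with finitely many terms of nonnegative degree (those with $j<0$, and $b_j=0$ once $j\le -n-1$), hence a genuine element of $\mathbb K_\infty$.

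It then remains to check that $\chi(\xi)=e(\eta\xi)$ for \emph{all} $\xi\in\mathbb K_\infty$, not just monomials. Here I would argue by density and continuity: the finite $\mathbb F_q$-linear combinations of the $t^j$ (that is, elements of $\mathbb K_\infty$ with finitely many nonzero coefficients) are dense in $\mathbb K_\infty$, both $\chi$ and $\xi\mapsto e(\eta\xi)$ are continuous, and on such finite combinations $\xi=\sum a_j t^j$ we have, using additivity of $\chi$ and of $e_q$,
\[
\chi(\xi)=\prod_j\chi(a_jt^j)=\prod_j e_q(a_jb_j)=e_q\Bigl(\sum_j a_jb_j\Bigr)=e_q(\operatorname{res}(\eta\xi))=e(\eta\xi),
\]
where the penultimate equality is the observation that $\operatorname{res}(\eta\xi)=\sum_j a_j b_j$ by \eqref{1.1}. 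Two continuous functions agreeing on a dense set agree everywhere, completing the proof.

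The main obstacle, such as it is, is the use of continuity to control $\chi$ at the ``infinite tail'' — i.e.\ establishing that $\chi$ is trivial on a sufficiently small neighbourhood of $0$ so that $\eta$ comes out to be a bona fide Laurent series (finitely many terms of nonnegative order). This is where the local-field structure genuinely enters; everything else is bookkeeping with the trace pairing on $\mathbb F_q$ and a routine density argument. Since the statement is flagged in the excerpt as ``a consequence of a more general result concerning the self-duality of local fields'', one could alternatively just cite Tate's thesis or Weil's \emph{Basic Number Theory}; but the self-contained argument above is short enough to include directly.
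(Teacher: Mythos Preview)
Your argument is correct. The paper's own proof is a one-line citation to Weil's \emph{Basic Number Theory} (the corollary to Theorem 3 in Chapter II, \S5), invoking the general self-duality of local fields; you instead give a self-contained proof tailored to $\mathbb K_\infty=\mathbb F_q((1/t))$. The key ingredients you use---the ``no small subgroups'' property of $\mathbb C^\times$ to force $\chi$ to be trivial on some $t^{-n}\mathbb F_q[[1/t]]$, the nondegeneracy of the trace pairing on $\mathbb F_q$ to extract the coefficients $b_j$, and a density argument via Laurent polynomials---are all standard and correctly applied. The paper's route is shorter and situates the result in its natural generality, while your argument is elementary and makes the construction of $\eta$ explicit in terms of the values $\chi(at^j)$; this explicitness is in the same spirit as the paper's Lemma \ref{lemma2.3}, which computes $\eta$ by hand for the particular character $\xi\mapsto e(\alpha\xi^p)$. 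You even anticipate the comparison yourself in your final paragraph.
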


\begin{proof} This is an immediate consequence of the Corollary to Theorem 3 of 
Weil \cite[Section 5 of Chapter 2]{Wei1967}.
\end{proof}

In \cite{Car1952} and \cite[Example 1.2]{LLW2025}, it is explained that for certain irrational 
$\alpha\in \mathbb K_\infty$, the sequence $(\alpha u^p)_{u\in \mathbb F_q[t]}$ is not equidistributed in 
$\mathbb T$. This is achieved via an explicit computation. One may also explain this failure of equidistribution by 
explicitly computing the element $\eta \in \mathbb K_\infty$, whose existence is assured by Theorem 
\ref{theorem2.2}, having the property that for all $\xi\in \mathbb K_\infty$, one has $e(\alpha \xi^p)=e(\eta \xi)$. We 
record the relevant explicit construction in the form of a lemma. In this context, we note that since we assume that 
$q=p^m$, the Frobenius map $a\mapsto a^p$ on $\mathbb F_q$ possesses an inverse map $b\mapsto b^{1/p}$ 
given by putting $b^{1/p}=b^{p^{m-1}}$ for each $b\in \mathbb F_q$. Equipped with this inverse to Frobenius, we 
define the mapping $\psi:\mathbb K_\infty\rightarrow \mathbb K_\infty$ by putting
\begin{equation}\label{2.1}
\psi\biggl( \sum_{i\le n}a_it^i\biggr) =\sum_{pj+p-1\le n}a_{pj+p-1}^{1/p}t^j.
\end{equation}

\begin{lemma}\label{lemma2.3}
For all $\alpha,\xi\in \mathbb K_\infty$, one has $e(\alpha \xi^p)=e(\psi(\alpha)\xi)$.
\end{lemma}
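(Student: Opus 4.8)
The plan is to verify the identity $e(\alpha\xi^p)=e(\psi(\alpha)\xi)$ directly from the definitions, by unravelling both sides into expressions involving the trace map and residues. Recall that $e(\beta)=e_q(\operatorname{res}\beta)=e^{2\pi{\rm i}\,{\rm tr}(\operatorname{res}\beta)/p}$, so it suffices to show that ${\rm tr}(\operatorname{res}(\alpha\xi^p))={\rm tr}(\operatorname{res}(\psi(\alpha)\xi))$ for all $\alpha,\xi\in\mathbb K_\infty$. Both sides of the claimed identity are continuous in $\xi$ (indeed the maps $\xi\mapsto e(\alpha\xi^p)$ and $\xi\mapsto e(\psi(\alpha)\xi)$ are continuous additive characters, once one checks additivity of $\xi\mapsto\xi^p$ in characteristic $p$), so by density it is enough to treat $\xi=ct^\ell$ a single monomial with $c\in\mathbb F_q$ and $\ell\in\mathbb Z$; the general case follows by linearity and passage to the limit. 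In fact, since the residue of a product picks out a single coefficient, reducing to monomial $\xi$ loses nothing.

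First I would write $\alpha=\sum_{i\le n}a_it^i$ and compute $\psi(\alpha)=\sum_{pj+p-1\le n}a_{pj+p-1}^{1/p}t^j$ by definition \eqref{2.1}. With $\xi=ct^\ell$, the left side involves $\operatorname{res}(\alpha c^p t^{p\ell})=c^p\cdot(\text{coefficient of }t^{-1-p\ell}\text{ in }\alpha)=c^p a_{-1-p\ell}$, so ${\rm tr}(\operatorname{res}(\alpha\xi^p))={\rm tr}(c^p a_{-1-p\ell})$. For the right side, $\operatorname{res}(\psi(\alpha)ct^\ell)=c\cdot(\text{coefficient of }t^{-1-\ell}\text{ in }\psi(\alpha))$; taking $j=-1-\ell$ in the expansion of $\psi(\alpha)$, the relevant index is $pj+p-1=p(-1-\ell)+p-1=-1-p\ell$, so this coefficient is $a_{-1-p\ell}^{1/p}$ and we get ${\rm tr}(\operatorname{res}(\psi(\alpha)\xi))={\rm tr}(c\,a_{-1-p\ell}^{1/p})$. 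Thus the identity reduces to the assertion that ${\rm tr}(c^p b^p)={\rm tr}(cb)$ for all $c,b\in\mathbb F_q$ (writing $b=a_{-1-p\ell}^{1/p}$), i.e.\ that ${\rm tr}(x^p)={\rm tr}(x)$ for all $x\in\mathbb F_q$.

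The last point is a standard fact: since ${\rm tr}(x)=x+x^p+\cdots+x^{p^{m-1}}$ and $x^{p^m}=x$ in $\mathbb F_q$, applying Frobenius permutes the summands cyclically, so ${\rm tr}(x^p)={\rm tr}(x)$. The only genuine thing to be careful about — and the step I expect to require the most attention — is the reduction to monomials, specifically confirming that $\xi\mapsto\xi^p$ is additive on $\mathbb K_\infty$ (the freshman's dream in characteristic $p$, which holds coefficientwise since the cross terms in $(\xi+\xi')^p$ carry binomial coefficients divisible by $p$) and that both sides depend continuously on $\xi$, so that verifying the identity on the dense $\mathbb F_q$-span of monomials $\{ct^\ell\}$ suffices. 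With that in hand, the index bookkeeping in \eqref{2.1} matches up exactly as above, and the proof is complete.
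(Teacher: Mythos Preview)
Your proof is correct and follows essentially the same approach as the paper: compute the residues of $\alpha\xi^p$ and $\psi(\alpha)\xi$, observe that one is the $p$-th power of the other, and conclude via ${\rm tr}(x^p)={\rm tr}(x)$. The only cosmetic difference is that you first reduce to a monomial $\xi=ct^\ell$ by additivity and density, whereas the paper expands a general $\xi=\sum_j b_j t^j$ directly and obtains the same residue comparison summed over $j$; your density/continuity step is valid but not actually needed, since the direct series computation already handles arbitrary $\xi$ in one stroke.
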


\begin{proof} Given $\alpha,\xi\in \mathbb K_\infty$, we may write
\[
\alpha=\sum_{i\in \mathbb Z}a_it^i\quad \text{and}\quad \xi=\sum_{i\in \mathbb Z}b_it^i,
\]
where $a_i,b_i\in \mathbb F_q$ $(i\in \mathbb Z)$, and one has $a_i=b_i=0$ for $i$ sufficiently large. Equipped with 
this notation, one sees that
\begin{align}
\alpha \xi^p&=\biggl( \sum_{i\in \mathbb Z}a_it^i\biggr) \biggl( \sum_{j\in \mathbb Z}b_jt^j\biggr)^p\notag \\
&= \biggl( \sum_{i\in \mathbb Z}a_it^i\biggr) \biggl( \sum_{j\in \mathbb Z}b_j^pt^{jp}\biggr) \notag \\
&= \sum_{l\in \mathbb Z}\biggl( \sum_{j\in \mathbb Z}a_{l-jp}b_j^p\biggr) t^l.\label{2.2}
\end{align}
Meanwhile, similarly, one has
\begin{align}
\psi(\alpha)\xi &=\biggl( \sum_{i\in \mathbb Z}a_{pi+p-1}^{1/p}t^i\biggr) \biggl( \sum_{j\in \mathbb Z}b_jt^j\biggr)
\notag \\
&= \sum_{l\in \mathbb Z}\biggl( \sum_{j\in \mathbb Z}a_{p(l-j)+p-1}^{1/p}b_j\biggr) t^l.\label{2.3}
\end{align}

\par A comparison of the coefficients of $t^{-1}$ between the formulae \eqref{2.2} and \eqref{2.3} reveals that
\[
\left( {\rm{res}}(\psi(\alpha )\xi)\right)^p=\biggl( \sum_{j\in \mathbb Z}a_{-jp-1}^{1/p}b_j\biggr)^p=
\sum_{j\in \mathbb Z}
a_{-jp-1}b_j^p={\rm{res}(\alpha \xi^p)}.
\]
Since ${\rm{tr}}(c^p)={\rm{tr}}(c)$ for all $c\in \mathbb F_q$, we deduce that 
${\rm{tr}}({\rm{res}}(\psi(\alpha)\xi))={\rm{tr}}({\rm{res}}(\alpha \xi^p))$, whence $e(\psi(\alpha)\xi)=e(\alpha \xi^p)$. 
This completes the proof of the lemma.
\end{proof}

We now return briefly to discuss the relevance of this lemma for \cite[Example 1.2]{LLW2025}. In that example, one 
considers an irrational element $\alpha\in \mathbb K_\infty$ of the shape $\alpha=\sum_{i\le n}a_it^i$, with 
$a_{-1}=a_{-p-1}=\ldots =0$. One sees that the map $\psi$ defined in \eqref{2.1} has the property that
\[
\psi(\alpha)=\sum_{pj+p-1\le n}a_{pj+p-1}^{1/p}t^j\in \mathbb F_q[t].
\]
Thus, one sees that whenever $u\in \mathbb F_q[t]$, one has
\[
e(\alpha u^p)=e(\psi(\alpha)u)=1.
\]
In particular, it is evident from this viewpoint that $(\alpha u^p)_{u\in \mathbb F_q[t]}$ cannot be equidistributed in 
$\mathbb T$. This example is readily generalized. Let $k\in \mathbb N$. Then, in a similar manner, one sees that 
whenever $\alpha$ and $\beta$ are elements of $\mathbb K_\infty$ having the property that 
$\psi(\alpha)+\beta\in \mathbb F_q[t]$, then
\[
e(\alpha u^{kp}+\beta u^k)=e\bigl( (\psi(\alpha)+\beta)u^k\bigr)=1.
\]
Thus, there exist irrational elements $\alpha,\beta\in \mathbb K_\infty$ having the property that
\[
\bigl( \alpha u^{kp}+\beta u^k\bigr)_{u\in \mathbb F_q[t]}
\]
is not equidistributed in $\mathbb T$. In a certain sense, the monomials $u^{kp}$ and $u^k$ interfere with each 
other in characteristic $p$.

\section{Additive polynomials}
A polynomial $A\in \mathbb K_\infty[x]$ is said to be {\it additive} if the identity $A(x+y)=A(x)+A(y)$ holds in the 
polynomial ring $\mathbb K_\infty[x,y]$. One may verify that every additive polynomial assumes the form
\[
A(x)=\sum_{\nu=0}^n\alpha_\nu x^{p^\nu}
\]
for some non-negative integer $n$ and $\alpha_0,\alpha_1,\ldots ,\alpha_n\in \mathbb K_\infty$. We denote by 
$\mathcal A$ the set of all additive polynomials lying in $\mathbb K_\infty[x]$. The relevance of the set of additive 
polynomials for our present purposes is that every polynomial in $\mathbb K_\infty[x]$ may be written canonically in 
terms of additive polynomials.

\begin{lemma}\label{lemma3.1} Suppose that $f\in \mathbb K_\infty[x]$. Then there exists a unique finite set 
$\mathcal R$ of positive integers, each coprime to $p$, and a unique collection $(A_r)_{r\in \mathcal R}$ of non-zero 
additive polynomials, such that
\begin{equation}\label{3.1}
f(x)=f(0)+\sum_{r\in \mathcal R}A_r(x^r).
\end{equation}
\end{lemma}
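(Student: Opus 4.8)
The plan is to decompose an arbitrary $f\in \mathbb K_\infty[x]$ monomial by monomial, using the base-$p$ structure of exponents. Every positive integer $n$ can be written uniquely as $n=p^\nu r$ with $\nu\ge 0$ and $p\nmid r$; thus the monomial $x^n$ equals $(x^r)^{p^\nu}$. First I would observe that for a single monomial $\alpha x^n$ with $n=p^\nu r$, we have $\alpha x^n = \alpha (x^r)^{p^\nu}$, which is exactly $B(x^r)$ for the additive polynomial $B(y)=\alpha y^{p^\nu}$. Writing $f(x)-f(0)=\sum_{n}\alpha_n x^n$, I would group the terms according to the value of $r$ obtained by stripping off all factors of $p$ from $n$: for each $p$-free positive integer $r$ that actually occurs, collect all $n$ of the form $p^\nu r$ with $\alpha_n\ne 0$, and set
\[
A_r(y)=\sum_{\nu\ge 0:\ \alpha_{p^\nu r}\ne 0}\alpha_{p^\nu r}\, y^{p^\nu}.
\]
This $A_r$ is a finite sum of monomials whose exponents are powers of $p$, hence additive, and it is non-zero by construction. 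Then $f(0)+\sum_{r\in \mathcal R}A_r(x^r)=f(x)$, giving existence, where $\mathcal R$ is the set of $p$-free $r$ for which at least one coefficient $\alpha_{p^\nu r}$ is non-zero.

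For uniqueness, the key point is that the representation \eqref{3.1} is just a regrouping of the monomial expansion of $f$. Suppose $f(0)+\sum_{r\in \mathcal R}A_r(x^r)=f(0)+\sum_{r\in \mathcal R'}A'_r(x^r)$ with each $A_r,A'_r$ a non-zero additive polynomial (so a non-zero $\mathbb K_\infty$-linear combination of $x^{p^\nu}$). Expanding, the left side contributes monomials $x^{p^\nu r}$ with $r\in \mathcal R$ and $A_r$ having a non-zero $y^{p^\nu}$-coefficient; since $p\nmid r$, the pair $(\nu,r)$ is recovered from the exponent $p^\nu r$ by factoring out the maximal power of $p$. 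Hence the monomials appearing on the two sides, together with their coefficients, must match, which forces $\mathcal R=\mathcal R'$ and $A_r=A'_r$ for every $r$.

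The only thing that needs care — and the closest thing to an obstacle — is making the bookkeeping between the two indexings precise: the claim rests entirely on the elementary fact that the map $(\nu,r)\mapsto p^\nu r$ from $\mathbb Z_{\ge 0}\times\{r\in\mathbb Z^+:p\nmid r\}$ to $\mathbb Z^+$ is a bijection, which is just existence and uniqueness of the $p$-adic valuation of a positive integer. Once that bijection is in hand, both existence and uniqueness are immediate from comparing coefficients of the (finite) monomial expansion of $f$, and no deeper input about $\mathbb K_\infty$ is required. I would therefore present the argument by first recording this bijection, then defining the $A_r$ as above for existence, and finally reading off uniqueness from the injectivity of $(\nu,r)\mapsto p^\nu r$ applied to the supports of the $A_r$.
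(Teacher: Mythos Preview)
Your proposal is correct and follows essentially the same approach as the paper: both arguments write $f(x)-f(0)=\sum_i\alpha_i x^i$, group monomials according to the $p$-free part $r$ of the exponent, and define $A_r(y)=\sum_{\nu:\,p^\nu r\in\mathcal I}\alpha_{p^\nu r}y^{p^\nu}$. You are in fact more explicit than the paper about uniqueness, which the paper leaves implicit; your observation that uniqueness follows from the bijection $(\nu,r)\mapsto p^\nu r$ is exactly the right justification.
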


\begin{proof} There exists a finite set $\mathcal I$ of positive integers, and a collection $(\alpha_i)_{i\in \mathcal I}$ 
of non-zero elements of $\mathbb K_\infty$, with the property that
\[
f(x)=f(0)+\sum_{i\in \mathcal I}\alpha_ix^i.
\]
As usual, we write $p^r\|m$ when $p^r|m$ and $p^{r+1}\nmid m$. Then, we define
\[
\mathcal R=\{ p^{-\nu}i:\text{$i\in \mathcal I$ and $p^\nu \|i$}\}.
\]
Thus, we have
\begin{align}
f(x)&=f(0)+\sum_{r\in \mathcal R}\sum_{\substack{\nu \in \mathbb Z^+\cup \{0\}\\ p^\nu r\in \mathcal I}}
\alpha_{p^\nu r}(x^r)^{p^\nu}\notag \\
&=f(0)+\sum_{r\in \mathcal R}A_r(x^r),\label{3.2}
\end{align}
where
\[
A_r(x)=\sum_{\substack{\nu \in \mathbb Z^+\cup\{0\}\\ p^\nu r\in \mathcal I}}\alpha_{p^\nu r}x^{p^\nu}.
\]
The conclusion of the lemma is immediate from the decomposition \eqref{3.2} on observing that $A_r(x)$ is an 
additive polynomial for each $r\in \mathcal R$.
\end{proof}

We next define a map $\tau: \mathcal A\rightarrow \mathbb K_\infty$ given explicitly by means of the formula
\begin{equation}\label{3.3}
\tau\biggl( \sum_{\nu=0}^n\alpha_\nu x^{p^\nu}\biggr) =\sum_{\nu=0}^n\psi^\nu(\alpha_\nu),
\end{equation}
in which $\psi$ is the function defined in \eqref{2.1}, and $\psi^\nu$ denotes the $\nu$-fold composition of $\psi$ 
with itself. Given an additive polynomial
\[
A(x)=\sum_{\nu=0}^n\alpha_\nu x^{p^\nu},
\]
we see from repeated application of Lemma \ref{lemma2.3} that this map has the property that, whenever 
$\xi\in \mathbb K_\infty$, one has
\begin{equation}\label{3.4}
e(A(\xi))=e\biggl( \sum_{\nu=0}^n\alpha_\nu \xi^{p^\nu}\biggr) =
e\biggl( \sum_{\nu=0}^n\psi^\nu(\alpha_\nu)\xi\biggr) =e(\tau(A)\xi). 
\end{equation}
This observation confirms explicitly the conclusion of Theorem \ref{theorem2.2} for the continuous additive character 
$\xi \mapsto e(A(\xi))$ on $\mathbb K_\infty$. We note in particular that \eqref{3.3} gives the relation 
$\tau(\alpha x)=\alpha$, for $\alpha\in \mathbb K_\infty$.\par

The map $\tau$ that we have introduced in \eqref{3.3} combines with the decomposition given in Lemma 
\ref{lemma3.1} to replace a given polynomial $f\in \mathbb K_\infty[x]$ by a substitute free of monomials of the 
shape $x^{pm}$ $(m\in \mathbb Z^+)$. This distillation provides a route by which one may avoid difficulties in 
applying the work of \cite{LLW2025} in investigations concerning equidistribution in $\mathbb T$.

\begin{lemma}\label{lemma3.2} Suppose that $f\in \mathbb K_\infty[x]$. Write $f(x)$ in the shape \eqref{3.1} for a 
suitable set of positive integers $\mathcal R$, each coprime to $p$, and a collection $(A_r)_{r\in \mathcal R}$ of non-
zero additive polynomials. Define
\begin{equation}\label{3.5}
g(x)=f(0)+\sum_{r\in \mathcal R}\tau(A_r)x^r.
\end{equation}
Then, for all $\xi\in \mathbb K_\infty$, one has $e(f(\xi))=e(g(\xi))$.
\end{lemma}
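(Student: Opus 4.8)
The plan is to reduce the claim directly to the monomial-by-monomial identity already packaged in \eqref{3.4}. First I would recall that, by hypothesis, $f$ has the canonical decomposition
\[
f(x)=f(0)+\sum_{r\in \mathcal R}A_r(x^r),
\]
so that, for any fixed $\xi\in \mathbb K_\infty$,
\[
e(f(\xi))=e\Bigl( f(0)+\sum_{r\in \mathcal R}A_r(\xi^r)\Bigr)=e(f(0))\prod_{r\in \mathcal R}e\bigl( A_r(\xi^r)\bigr),
\]
using that $e$ is an additive character on $\mathbb K_\infty$ (so it turns the finite sum into a product of values).

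Next, for each $r\in \mathcal R$ I would apply \eqref{3.4} with the substitution $\xi\mapsto \xi^r$: since $A_r$ is an additive polynomial, $e(A_r(\xi^r))=e(\tau(A_r)\,\xi^r)$. Substituting this into the product gives
\[
e(f(\xi))=e(f(0))\prod_{r\in \mathcal R}e\bigl(\tau(A_r)\,\xi^r\bigr)=e\Bigl( f(0)+\sum_{r\in \mathcal R}\tau(A_r)\,\xi^r\Bigr)=e(g(\xi)),
\]
where the middle equality again uses additivity of $e$ and the last equality is just the definition \eqref{3.5} of $g$. That completes the argument.

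There is essentially no obstacle here: the lemma is a bookkeeping consequence of Lemma \ref{lemma3.1} (existence of the additive decomposition) and the key identity \eqref{3.4} (which itself is the iterated form of Lemma \ref{lemma2.3}), combined with the multiplicativity of $e$ over finite sums. The one point worth stating carefully is that the decomposition \eqref{3.1} is a \emph{finite} sum, so no convergence issue arises in passing $e$ through it; and that \eqref{3.4} is valid for the specific argument $\xi^r$, which is legitimate since it holds for every element of $\mathbb K_\infty$. One might also remark, as a sanity check on the statement, that $g$ is genuinely free of monomials of the form $x^{pm}$, since each $r\in\mathcal R$ is coprime to $p$; but this observation is not needed for the proof of the displayed identity.
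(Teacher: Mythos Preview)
Your argument is correct and follows exactly the same route as the paper: use the decomposition \eqref{3.1} to write $e(f(\xi))=e(f(0))\prod_{r\in\mathcal R}e(A_r(\xi^r))$, apply \eqref{3.4} to each factor, and recombine via \eqref{3.5}. The paper's proof is simply the one-line version of what you wrote.
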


\begin{proof} On making use of \eqref{3.1}, \eqref{3.4} and \eqref{3.5}, one sees that
\[
e(f(\xi))=e(f(0))\prod_{r\in \mathcal R}e(A_r(\xi^r))=e(f(0))\prod_{r\in \mathcal R}e(\tau(A_r)\xi^r)=e(g(\xi)).
\]
This completes the proof of the lemma.
\end{proof}

\section{Equidistribution in $\mathbb T$}
Our goal in this section is to make use of the substitution principle embodied in Lemma \ref{lemma3.2} so as to 
apply the conclusions of \cite{LLW2025} in order to confirm Theorem \ref{theorem1.2}. We begin by recalling a 
consequence of \cite[Proposition 5.2]{LLW2025}.

\begin{theorem}\label{theorem4.1}
Let $\mathcal R$ be a finite set of positive integers coprime to $p$, and let $f\in \mathbb K_\infty[x]$ be a 
polynomial of the form
\[
f(x)=\alpha_0+\sum_{r\in \mathcal R}\alpha_rx^r,
\]
with $\alpha_i\in \mathbb K_\infty$ $(i\in \mathcal R\cup \{0\})$. Suppose that $\alpha_r$ is irrational for some index 
$r\in \mathcal R$. Then the sequence $(f(u))_{u\in \mathbb F_q[t]}$ is equidistributed in $\mathbb T$.
\end{theorem}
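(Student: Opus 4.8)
The plan is to derive Theorem~\ref{theorem4.1} directly from \cite[Proposition 5.2]{LLW2025}. As recalled above, that proposition asserts that $(f(u))_{u\in\mathbb F_q[t]}$ is equidistributed in $\mathbb T$ as soon as $\alpha_k$ is irrational for some index $k$ lying in the set $\widetilde{\mathcal R}$ attached to $\mathcal R$ by the iterative stripping procedure culminating in \eqref{1.2}. Hence it suffices to prove that, when every element of $\mathcal R$ is coprime to $p$, one has $\widetilde{\mathcal R}=\mathcal R$; the hypothesis that $\alpha_r$ is irrational for some $r\in\mathcal R$ then produces an irrational coefficient indexed by an element of $\widetilde{\mathcal R}$, and the theorem follows at once.

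The key observation needed for the identity $\widetilde{\mathcal R}=\mathcal R$ is that passing to the shadow cannot increase the largest element of a set. Indeed, if $j\in\mathcal S(\mathcal K)$ then $p\nmid\binom rj$ for some $r\in\mathcal K$, so in particular $\binom rj\ne 0$, which forces $1\le j\le r\le\max\mathcal K$; thus $\mathcal S(\mathcal K)\subseteq\{1,\dots,\max\mathcal K\}$ for every finite nonempty $\mathcal K\subseteq\mathbb Z^+$ (no appeal to Lucas' theorem is required). I would then run the recursion $\mathcal R=\mathcal R_0\supseteq\mathcal R_1\supseteq\cdots$ and show it terminates. While $\mathcal R_n\ne\emptyset$, set $k_n=\max\mathcal R_n$; since $\mathcal R_n\subseteq\mathcal R$ we have $p\nmid k_n$, and since $p\ge 2$ and $k_n\ge 1$, for every $v\in\mathbb Z^+$ we have $p^v k_n>k_n=\max\mathcal R_n$, so $p^v k_n\notin\mathcal S(\mathcal R_n)$ by the containment just established. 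Hence $k_n\in\mathcal R_n^*$, so $\mathcal R_n^*\ne\emptyset$ and $\abs{\mathcal R_{n+1}}<\abs{\mathcal R_n}$. Therefore $\mathcal R_N=\emptyset$ for some $N$; since $\mathcal R_n=\mathcal R_{n+1}\sqcup\mathcal R_n^*$ for each $n$, telescoping gives $\mathcal R=\bigsqcup_{n\ge 0}\mathcal R_n^*=\widetilde{\mathcal R}$.

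Combining the two observations completes the argument: the irrational coefficient $\alpha_r$ is indexed by $r\in\mathcal R=\widetilde{\mathcal R}$, so \cite[Proposition 5.2]{LLW2025} delivers the equidistribution of $(f(u))_{u\in\mathbb F_q[t]}$ in $\mathbb T$ asserted by Theorem~\ref{theorem4.1}. I do not anticipate any genuine obstacle here: the entire content is the elementary inclusion $\mathcal S(\mathcal K)\subseteq\{1,\dots,\max\mathcal K\}$ together with the bookkeeping that the stripping procedure exhausts any set all of whose members are prime to $p$. The only point demanding care is a notational one --- ensuring that at stage $n$ the starred-set operation, and hence $\mathcal S$, is taken relative to $\mathcal R_n$ rather than the original $\mathcal R$, so that the largest-element argument is available at every stage of the recursion.
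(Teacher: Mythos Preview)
Your proposal is correct and follows essentially the same route as the paper: both reduce to \cite[Proposition 5.2]{LLW2025} by showing $\widetilde{\mathcal R}=\mathcal R$, and both do so by observing that the largest element of each nonempty $\mathcal R_n$ lies in $\mathcal R_n^*$, so the recursion strictly decreases the cardinality until it terminates. Your write-up is simply more explicit than the paper's about why $\max\mathcal R_n\in\mathcal R_n^*$, spelling out the elementary bound $\mathcal S(\mathcal K)\subseteq\{1,\dots,\max\mathcal K\}$ that the paper leaves implicit in the phrase ``and hence''.
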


\begin{proof} It follows from the definition \eqref{1.2} that when $(r,p)=1$ for all $r\in \mathcal R$, one has 
${\widetilde{\mathcal R}}=\mathcal R$. In order to confirm this observation, note first that 
$\mathcal R_n\subset \mathcal R$ for each $n\ge 0$. Then, whenever $\mathcal R_n$ is non-empty, its largest 
element is coprime to $p$, and hence is also an element of $\mathcal R_n^*$. Since each non-empty set 
$\mathcal R_n^*$ contains at least one element not contained in $\mathcal R_{n+1}$, it follows that
\[
\bigcup_{n=0}^{{\rm{card}}(\mathcal R)}\mathcal R_n^*=\mathcal R,
\]
whence ${\widetilde{\mathcal R}}=\mathcal R$, as claimed. The conclusion of the theorem is therefore immediate 
from \cite[Proposition 5.2]{LLW2025}.
\end{proof}

We note that a simplified proof of Theorem \ref{theorem4.1} is available in which the set ${\widetilde{\mathcal R}}$, 
as well as the application of \cite[Proposition 5.2]{LLW2025}, is avoided. Indeed, since $\mathcal R$ contains only 
positive integers coprime to $p$, the conclusion of \cite[Theorem 4.1]{LLW2025} remains valid for $\mathcal R$ if, in 
the proof, we replace the shadow partial ordering on $\mathcal R^*$ with the standard integer ordering on 
$\mathcal R$. Consequently, the conclusion of \cite[Lemma 5.1]{LLW2025} also holds when $f$ has its coefficients 
$\alpha_r$ supported on $r\in \mathcal R\cup \{ 0\}$. One now obtains a proof of Theorem \ref{theorem4.1} by 
following the corresponding proof of \cite[Theorem 1.4]{LLW2025}.\par

This conclusion is applicable to the substitute polynomials generated by Lemma \ref{lemma3.2}.

\begin{theorem}\label{theorem4.2} Let $f\in \mathbb K_\infty[x]$, and write $f(x)$ in the shape \eqref{3.1} for a 
suitable set of positive integers $\mathcal R$, each coprime to $p$, and a collection $(A_r)_{r\in \mathcal R}$ of 
non-zero additive polynomials. Suppose that, for all $h\in \mathbb F_q[t]\setminus \{0\}$, there exists 
$r\in \mathcal R$ having the property that $\tau(hA_r)$ is irrational. Then the sequence $(f(u))_{u\in \mathbb F_q[t]}$ 
is totally equidistributed in $\mathbb T$.
\end{theorem}

In the original version of our paper, we recorded a Lemma 4.2 equivalent to Theorem \ref{theorem4.2} above save 
that the total equidistribution conclusion here was replaced by the classical equidistribution conclusion therein. As 
we have remarked already in the introduction, this stronger conclusion appeared as Theorem 3.5 in a note of 
Ackelsberg and Bergelson \cite{AB2026} that was uploaded to the arXiv following the original version of this paper.

Suppose that $f(x)=\sum_{i\in \mathcal{K}\cup\{0\}}\alpha_i x^i\in \mathbb{K}_\infty[x]$, where 
$\mathcal{K}\subseteq\mathbb{Z}^+$ is finite, and suppose further that there exists $k\in \mathcal{K}$ with 
$p\nmid k$ such that $\alpha_k$ is irrational and $p^v k\notin \mathcal{K}$ for any $v\in \mathbb{Z}^+$. By the 
discussion following \eqref{3.3}, the coefficient
\[
\tau(hA_k)=\tau(h\alpha_k x)=h\alpha_k
\]
is irrational for every $h\in \mathbb F_q[t]\setminus \{0\}$. Thus Theorem \ref{theorem1.2} is an immediate corollary 
of Theorem \ref{theorem4.2}. We now turn to the proof of Theorem \ref{theorem4.2}.

\begin{proof}[Proof of Theorem \ref{theorem4.2}]
Fix an arbitrary element $h\in \mathbb F_q[t]\setminus \{ 0\}$. The hypotheses of Theorem \ref{theorem4.2} ensure 
that
\[
hf(x)=hf(0)+\sum_{r\in \mathcal R}hA_r(x^r),
\]
where $\mathcal R$ is a set of positive integers coprime to $p$, and $(hA_r)_{r\in \mathcal R}$ is a collection of 
nonzero  additive polynomials. By Lemma \ref{lemma3.2}, the 
polynomial
\[
g_h(x)=hf(0)+\sum_{r\in \mathcal R}\tau(hA_r)x^r
\]
satisfies the property that, for all $u\in \mathbb F_q[t]$, one has
\[
e(hf(u))=e(g_h(u)).
\]
Furthermore, the hypothesis of Theorem \ref{theorem4.2} implies that that polynomial $g_h(x)$ has at least one 
irrational coefficient $\tau(hA_r)$ for $r\in \mathcal R$. We fix $r_0$ to be the largest index $r\in \mathcal R$ having 
the property that $\tau(hA_r)$ is irrational. It then follows that $\tau(hA_r)$ is rational whenever $r\in \mathcal R$ 
and $r>r_0$. \par

We now refine the preceding argument to obtain total equidistribution. For $m,s\in \mathbb{F}_q[t]$ with 
$m\neq 0$, write
\[
g_h(mx+s)=\beta_0+ \sum_{i\in \mathcal{S}(\mathcal{R})}
\beta_i\, x^i,
\]
with
\[
\beta_i=\sum_{r\geq i}\binom{r}{i}\tau(hA_r)m^i s^{r-i},
\]
where $\mathcal{S}(\mathcal{R})$ is the shadow of $\mathcal{R}$ defined in \eqref{def:shadow}. We note that 
$r_0\in \mathcal{S}(\mathcal{R})$ and $\beta_{r_0}$ is irrational, and further that $\beta_i$ is rational for all $i>r_0$. 
We now find from Lemma \ref{lemma3.1} that $g_h(mx+s)$ can be written in the form
\[
g_h(mx+s)=\beta_0 + \sum_{j\in \mathcal{J}} B_j(x^j),
\]
where
\[
\mathcal{J}=\{p^{-\nu}i: i\in \mathcal{S}(\mathcal{R})\textup{ and }p^\nu\| i\}
\]
and
\[
B_j(x)=\sum_{\substack{\nu\in \mathbb{Z}^+\cup\{0\}\\ p^\nu j\in \mathcal{S}(\mathcal{R})}}\beta_{p^\nu j} 
x^{p^{\nu}}\quad (j\in \mathcal{J}).
\]
Define
\[
g_{h,m,s}(x)=\beta_0 + \sum_{j\in \mathcal{J}}\tau(B_j) x^j.
\]
Then
\[
e(hf(mu+s))=e(g_h(mu+s))=e(g_{h,m,s}(u))
\]
for all $h,m\in \mathbb{F}_q[t]\setminus\{0\}$ and $s\in \mathbb{F}_q[t]$. 

From the definition of $\tau(\cdot)$ in \eqref{3.3}, we observe that
\[
\tau(B_{r_0})=\tau\Bigg(\sum_{\substack{\nu\in \mathbb{Z}^+\cup\{0\}\\ p^\nu r_0\in \mathcal{S}(\mathcal{R})}}
\beta_{p^\nu r_0} x^{p^{\nu}}\Bigg)
=
\beta_{r_0}+
\sum_{\substack{\nu\in \mathbb{Z}^+\\ p^{\nu}r_0\in \mathcal{S}(\mathcal{R})}}
\psi^{\nu}(\beta_{p^{\nu}r_0}).
\]
Recalling the $\psi$-function defined in \eqref{2.1}, we note that $\psi(\alpha)$ is rational whenever 
$\alpha\in \mathbb{K}_\infty$ is rational. This is because $\alpha$ is rational if and only if the coefficients of 
successive powers of $t^{-1}$ in $\alpha$ are eventually periodic. Since $\beta_{r_0}$ is irrational and $\beta_i$ is 
rational for all $i>r_0$, it follows that $\tau(B_{r_0})$ is irrational.\par

Finally, since $g_{h,m,s}(x)$ satisfies the hypotheses of Theorem~\ref{theorem4.1}, we find that the sequence 
$(g_{h,m,s}(u))_{u\in \mathbb F_q[t]}$ is equidistributed in $\mathbb T$. We therefore infer from 
Theorem~\ref{theorem2.1} that
\[
\lim_{N\rightarrow \infty}q^{-N}\biggl| \sum_{\substack{u\in \mathbb F_q[t]\\ {\rm ord}\, u<N}}e(hf(mu+s))\biggr| =
\lim_{N\rightarrow \infty}q^{-N}\biggl| \sum_{\substack{u\in \mathbb F_q[t]\\ {\rm ord}\, u<N}}e(g_{h,m,s}(u))\biggr| 
=0.
\]
Since $h,m,s$ are arbitrary, a further application of Theorem~\ref{theorem2.1} yields the conclusion that 
$(f(u))_{u\in \mathbb F_q[t]}$ is totally equidistributed in $\mathbb T$. This completes the proof of Theorem 
\ref{theorem4.2}.
\end{proof}

The argument that we applied in the proof of Theorem \ref{theorem1.2} can be incorporated into the method of 
proof of \cite[Theorem 4.1]{LLW2025} so as to deliver the following refinement of \cite[Proposition 4.2]{LLW2025}. 
This constitutes the quantitative bound to which we alluded in the introduction. Since the proof of this result 
introduces no new ideas beyond those already introduced in our proof of Theorem \ref{theorem1.2} and that of 
\cite[Theorem 4.1]{LLW2025}, we leave the details as an exercise for the reader.

\begin{theorem} \label{theorem4.3}
Fix $q$ and a finite set $\mathcal K \subset \mathbb Z^+$. There exist positive constants $c$ and $C$, depending 
only on $\mathcal K$ and $q$, such that the following holds. Let $\varepsilon>0$ and let $N$ be sufficiently large in 
terms of $\mathcal K$, $\varepsilon$ and $q$. Suppose that $f(x)=\sum_{r\in \mathcal K\cup\{0\}}\alpha_r x^{r}$ is a 
polynomial with coefficients in $\mathbb K_\infty$ satisfying the bound
\[
\biggl| \sum_{\substack{u\in \mathbb F_q[t]\\ {\rm{ord}}\, u<N}} e(f(u))\biggr| \geq q^{N-\eta },
\]
for some positive number $\eta$ with $\eta\le cN$. Then, for any $k\in \mathcal K$ satisfying $p\nmid k$ and 
furthermore $p^vk\not\in \mathcal K$ for any $v\in \mathbb Z^+$, there exist $a_k\in \mathbb F_q[t]$ and monic 
$g_k\in \mathbb F_q[t]$ such that
\[
{\rm{ord}} (g_k\alpha_k-a_k)<-kN+\varepsilon N+C\eta \quad \textrm{and} \quad {\rm{ord}}\,g_k\leq 
\varepsilon N+C\eta.
\]
\end{theorem}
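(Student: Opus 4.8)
The strategy is to mirror the derivation of Theorem \ref{theorem1.2} from Lemma \ref{lemma4.2}, but now tracking the quantitative content supplied by the proof of \cite[Theorem 4.1]{LLW2025}. First I would fix $h$ to be the constant polynomial $1$ (there is no averaging over $h$ here, since the hypothesis is a lower bound for a single exponential sum). Write $f$ in the canonical additive form \eqref{3.1}, so that $f(x)=\alpha_0+\sum_{r\in\mathcal R}A_r(x^r)$ with $\mathcal R=\{p^{-\nu}i:i\in\mathcal K,\ p^\nu\|i\}$, each $r$ coprime to $p$, and as in the proof of Theorem \ref{theorem1.2} the hypothesis $p^vk\not\in\mathcal K$ ($v\in\mathbb Z^+$) forces $k\in\mathcal R$ and $A_k(x)=\alpha_k x$, hence $\tau(A_k)=\alpha_k$. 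By Lemma \ref{lemma3.2} the substitute polynomial $g(x)=\alpha_0+\sum_{r\in\mathcal R}\tau(A_r)x^r$ satisfies $e(f(u))=e(g(u))$ for all $u\in\mathbb F_q[t]$, so the hypothesised lower bound $|\sum_{\mathrm{ord}\,u<N}e(f(u))|\ge q^{N-\eta}$ is the \emph{same} lower bound with $f$ replaced by $g$, whose exponent set $\mathcal R$ lies entirely in $\mathbb Z^+$ coprime to $p$.

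\textbf{Main step.} Now I would invoke the quantitative engine behind \cite[Theorem 4.1]{LLW2025} — equivalently the proof of \cite[Theorem 1.4]{LLW2025} with the shadow partial order on $\mathcal R^*$ replaced by the ordinary integer order, exactly the modification already spelled out in the remark following Theorem \ref{theorem4.1}. That machinery (Vinogradov's mean value theorem plus the large sieve inequality, as in \cite{LLW2025}) takes a polynomial with exponents a $p$-coprime set $\mathcal R$ whose exponential sum over $\{\mathrm{ord}\,u<N\}$ exceeds $q^{N-\eta}$, and produces, for each designated exponent $r\in\mathcal R$, a simultaneous rational approximation: there exist $a_r\in\mathbb F_q[t]$ and monic $g_r\in\mathbb F_q[t]$ with $\mathrm{ord}(g_r\beta_r-a_r)<-rN+\varepsilon N+C\eta$ and $\mathrm{ord}\,g_r\le\varepsilon N+C\eta$, where $\beta_r$ is the coefficient of $x^r$ in the polynomial and $c,C$ depend only on $\mathcal R$ (hence only on $\mathcal K$) and $q$, with $N$ large in terms of $\mathcal K,\varepsilon,q$. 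Applying this with $r=k$ and $\beta_k=\tau(A_k)=\alpha_k$ gives precisely $a_k,g_k$ with $\mathrm{ord}(g_k\alpha_k-a_k)<-kN+\varepsilon N+C\eta$ and $\mathrm{ord}\,g_k\le\varepsilon N+C\eta$, which is the assertion. The condition $\eta\le cN$ is what guarantees the error terms $\varepsilon N+C\eta$ remain within the admissible range for the minor-arc/large-sieve analysis; it is inherited verbatim from \cite[Proposition 4.2]{LLW2025}.

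\textbf{The obstacle.} The only genuine work is bookkeeping: confirming that the constants $c,C$ emerging from \cite[Proposition 4.2]{LLW2025} depend only on $\mathcal K$ and $q$ after the passage $f\rightsquigarrow g$. This requires noting that $\mathcal R$ is determined by $\mathcal K$ alone, that the number of terms in each additive polynomial $A_r$ is bounded in terms of $\mathcal K$, and that the map $\tau$ is an $\mathbb F_q$-linear isometry-type operation on coefficients (it is built from $\psi$, which merely extracts and relabels Laurent coefficients and applies the Frobenius inverse, so it neither creates nor destroys irrationality and affects $\mathrm{ord}$ only in a controlled way). Once this is in hand, every estimate in the proof of \cite[Theorem 4.1]{LLW2025} applies to $g$ with constants absorbed into those depending on $\mathcal K,q$, and the stated bounds for $g_k,a_k$ follow. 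As the authors indicate, none of this introduces new ideas beyond the proof of Theorem \ref{theorem1.2} combined with \cite[Theorem 4.1]{LLW2025}, so the argument is left to the reader.
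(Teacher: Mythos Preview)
Your proposal is correct and follows precisely the route the paper indicates: the paper does not give a proof of Theorem \ref{theorem4.3} but explicitly states that ``the proof of this result introduces no new ideas beyond those already introduced in our proof of Theorem \ref{theorem1.2} and that of \cite[Theorem 4.1]{LLW2025}, we leave the details as an exercise for the reader,'' and your outline is exactly that exercise --- pass from $f$ to the substitute polynomial $g$ via Lemma \ref{lemma3.2}, observe that the coefficient of $x^k$ in $g$ is $\tau(A_k)=\alpha_k$ itself, and then feed $g$ into the quantitative machinery of \cite[Proposition 4.2]{LLW2025} with the simplification noted after Theorem \ref{theorem4.1}. One minor remark: in your ``obstacle'' paragraph the discussion of how $\tau$ affects ${\rm ord}$ is unnecessary, since for the distinguished index $k$ one has $\tau(A_k)=\alpha_k$ on the nose, and the behaviour of $\tau(A_r)$ for the other $r\in\mathcal R$ is irrelevant to the conclusion (no approximation is claimed for those coefficients).
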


\section{An example related to work of Bergelson and Leibman}
We complete our discussion of equidistribution in $\mathbb T$ by exhibiting an example that provides evidence for 
the infelicity of the ``only if'' direction in the work of Bergelson and Leibman \cite{BL2016}. Since this example 
illuminates certain aspects of the theory of equidistribution in positive characteristic, we provide a largely 
self-contained account. We formulate the relevant claimed corollary in the form of a hypothesis.

\begin{hypothesis}\label{hypothesis5.1}
Suppose that $\mathcal R$ is a set of positive integers, each coprime to $p$, let $(A_r)_{r\in \mathcal R}$ be a 
collection of additive polynomials, and define
\[
f(x)=\sum_{r\in \mathcal R}A_r(x^r).
\]
If, for some $r\in \mathcal R$, the sequence $(A_r(u))_{u\in \mathbb F_q[t]}$ is equidistributed in $\mathbb T$, then 
$(f(u))_{u\in \mathbb F_q[t]}$ is also equidistributed in $\mathbb T$.
\end{hypothesis}

This assertion is an immediate consequence of the final conclusion of Ackelsberg \cite[Theorem 2.1]{Ack2025}, which 
he attributes to \cite[Theorem 0.3]{BL2016}. In the language of Ackelsberg, the notion of equidistribution in 
$\mathbb T$ is replaced by the notion of being well-distributed in $\mathbb T$. However, if 
$(A_r(u))_{u\in \mathbb F_q[t]}$ is equidistributed in $\mathbb T$ for an additive polynomial $A_r(x)$, then it is 
well-distributed in $\mathbb T$, and the hypothesis preliminary to the conclusion of Hypothesis \ref{hypothesis5.1} 
suffices (according to \cite[Theorem 2.1]{Ack2025}) to ensure that $(f(u))_{u\in \mathbb F_q[t]}$ is well-distributed in 
$\mathbb T$, and hence also equidistributed in $\mathbb T$. We shall exhibit an example demonstrating that 
Hypothesis \ref{hypothesis5.1} may fail.\par

We begin by recording a trivial observation that streamlines subsequent aspects of our discussion.

\begin{lemma}\label{lemma5.2}
Suppose that $A(x)\in \mathbb K_\infty[x]$ is an additive polynomial. Then the following are equivalent:
\begin{itemize}
\item[(a)] for all $h\in \mathbb F_q[t]\setminus \{0\}$, the character $x\mapsto e(hA(x))$ is non-trivial on 
$\mathbb F_q[t]$;
\item[(b)] for all $h\in \mathbb F_q[t]\setminus \{0\}$, one has $\tau(hA)\not\in \mathbb F_q[t]$;
\item[(c)] the sequence $(A(u))_{u\in \mathbb F_q[t]}$ is equidistributed in $\mathbb T$;
\item[(d)] the sequence $(A(u))_{u\in \mathbb F_q[t]}$ is well-distributed in $\mathbb T$.
\end{itemize}
\end{lemma}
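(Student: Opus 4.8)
\textbf{Proof proposal for Lemma \ref{lemma5.2}.}

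The plan is to establish the cycle of implications $(a)\Leftrightarrow(b)$, $(c)\Leftrightarrow(d)$, and then $(b)\Leftrightarrow(c)$, exploiting at each stage the fact that an additive polynomial $A$ induces, via the map $\tau$ of \eqref{3.3}, an especially rigid exponential sum. First I would dispose of $(a)\Leftrightarrow(b)$: by \eqref{3.4}, for each $h\in\mathbb F_q[t]\setminus\{0\}$ one has $e(hA(u))=e(\tau(hA)u)$ for all $u\in\mathbb F_q[t]$ (here $hA$ is again additive, so $\tau(hA)$ makes sense). Thus the character $u\mapsto e(hA(u))$ equals $u\mapsto e(\tau(hA)u)$. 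Writing $\tau(hA)=\eta$, this character is trivial on $\mathbb F_q[t]$ precisely when $\mathrm{res}(\eta u)$ lies in the kernel of $\mathrm{tr}$ for every $u\in\mathbb F_q[t]$; a short computation with the Laurent expansion of $\eta$ shows that $u\mapsto e(\eta u)$ is trivial on all of $\mathbb F_q[t]$ if and only if $\{\eta\}=0$, i.e.\ $\eta\in\mathbb F_q[t]$. (The nontrivial direction: if $\{\eta\}\ne 0$, pick the top nonzero coefficient $a_{-j}$ of $\{\eta\}$ with $j\ge 1$ minimal, then $u=t^{j-1}$ forces $\mathrm{res}(\eta u)=a_{-j}\ne 0$, and one may further scale by a suitable constant in $\mathbb F_q$ to escape the kernel of $\mathrm{tr}$ since $\mathrm{tr}$ is a nonzero $\mathbb F_p$-linear functional.) This gives $(a)\Leftrightarrow(b)$.

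Next, $(d)\Rightarrow(c)$ is immediate since well-distribution is a strengthening of equidistribution (averaging over dilated/translated ranges of summation versus a single range), so I would only need $(c)\Rightarrow(d)$, and in fact I would obtain both $(c)\Leftrightarrow(d)$ by routing through $(b)$. The key point is that, by \eqref{3.4} again, the relevant exponential sum collapses to a pure additive-character sum: for any $h\in\mathbb F_q[t]\setminus\{0\}$,
\[
\sum_{\substack{u\in\mathbb F_q[t]\\ \mathrm{ord}\,u<N}}e(hA(u))=\sum_{\substack{u\in\mathbb F_q[t]\\ \mathrm{ord}\,u<N}}e(\tau(hA)\,u),
\]
and the latter sum over the $\mathbb F_q$-vector space $\{u:\mathrm{ord}\,u<N\}$ is, by orthogonality of characters on this finite group, either equal to $q^N$ (when $u\mapsto e(\tau(hA)u)$ is trivial on this space) or equal to $0$ (otherwise), for all $N$ large enough that the space is big enough to detect nontriviality. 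Hence: if $(b)$ holds, then for every $h\ne 0$ the character $u\mapsto e(\tau(hA)u)$ is nontrivial on $\mathbb F_q[t]$, so it is nontrivial on $\{u:\mathrm{ord}\,u<N\}$ for all sufficiently large $N$, the sum vanishes identically for such $N$, and Theorem \ref{theorem2.1} yields $(c)$; the same vanishing, being uniform and exact rather than merely asymptotic, also gives the well-distribution estimate required for $(d)$ (one simply notes that any affine sub-family of the summation range is itself a coset of a subspace, on which a nontrivial character still sums to zero). Conversely, if $(b)$ fails, say $\tau(hA)\in\mathbb F_q[t]$ for some $h\ne 0$, then $e(hA(u))=e(\tau(hA)u)=1$ for all $u\in\mathbb F_q[t]$ since $\mathrm{res}$ of a polynomial is $0$, so the $h$-th Weyl sum equals $q^N$ for every $N$, and Theorem \ref{theorem2.1} shows $(c)$ fails; a fortiori $(d)$ fails. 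This closes the loop $(b)\Leftrightarrow(c)\Leftrightarrow(d)$.

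I do not anticipate a genuine obstacle here: the lemma is "trivial" exactly because additivity of $A$ linearizes the exponential sum through Lemma \ref{lemma2.3}, reducing equidistribution (and well-distribution) to a dichotomy governed by whether $\tau(hA)$ is a polynomial. The only point requiring a modicum of care is the characterization, used in $(a)\Leftrightarrow(b)$ and again in the converse half of $(b)\Leftrightarrow(c)$, that a character $u\mapsto e(\eta u)$ is trivial on $\mathbb F_q[t]$ if and only if $\eta\in\mathbb F_q[t]$, i.e.\ $\{\eta\}=0$; this is a direct unwinding of the definition of $e(\cdot)=e_q(\mathrm{res}(\cdot))$ together with the nondegeneracy of the trace pairing on $\mathbb F_q$, and can be stated in one or two lines. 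A secondary bookkeeping point is that the clean "$q^N$ or $0$" dichotomy for $\sum_{\mathrm{ord}\,u<N}e(\eta u)$ holds for all $N$ large enough relative to $\mathrm{ord}\,\{\eta\}$, which is all that is needed since both equidistribution and well-distribution are asymptotic notions.
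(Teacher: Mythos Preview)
Your proposal is correct and follows essentially the same route as the paper: the equivalence $(a)\Leftrightarrow(b)$ via \eqref{3.4} and the characterization of when $u\mapsto e(\eta u)$ is trivial, and the equivalence $(b)\Leftrightarrow(c)$ via the $q^N$-or-$0$ dichotomy for $\sum_{{\rm ord}\,u<N}e(\eta u)$ (the paper cites Kubota \cite[Lemma 7]{Kub1974} for this, whereas you invoke orthogonality on the finite group directly). The one minor divergence is in $(c)\Leftrightarrow(d)$: the paper argues directly from the additive identity $A(u+a)=A(u)+A(a)$ that equidistribution of $(A(u))$ is automatically uniform over all translates, hence holds along every F{\o}lner sequence, while you route through $(b)$ and the exact vanishing of character sums on cosets; both are short and valid.
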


\begin{proof}
In view of the relation \eqref{3.4}, the character $u\mapsto e(hA(u))$ is non-trivial on $\mathbb F_q[t]$ if and only if 
the character $u\mapsto e(\tau(hA)u)$ is likewise non-trivial on $\mathbb F_q[t]$. But the latter holds if and only if 
$\tau(hA)\not\in \mathbb F_q[t]$. Thus we find that (a) holds if and only if (b) holds.\par

Next, we observe that from Theorem \ref{theorem2.1} it follows that $(A(u))_{u\in \mathbb F_q[t]}$ is equidistributed 
in $\mathbb T$ if and only if, for any $h\in \mathbb F_q[t]\setminus \{ 0\}$, we have
\begin{equation}\label{5.1}
\lim_{N\rightarrow \infty}q^{-N}\biggl| \sum_{\substack{u\in \mathbb F_q[t]\\ {\rm{ord}}\, u<N}}e(hA(u))\biggr|=0.
\end{equation}
Since $e(hA(u))=e(\tau(hA)u)$, we see that when $\tau(hA)\in \mathbb F_q[t]$, one has $e(hA(u))=1$. Meanwhile, 
when $\tau(hA)\not\in \mathbb F_q[t]$, it follows from Kubota \cite[Lemma 7]{Kub1974} that
\[
\sum_{\substack{u\in \mathbb F_q[t]\\ {\rm{ord}}\,u<N}}e(hA(u))=\begin{cases} q^N,&
\text{when ${\rm{ord}}\{ \tau(hA)\}<-N$},
\\
0,&\text{when ${\rm{ord}}\{ \tau(hA)\}\ge -N$}.\end{cases}
\]
Thus, provided that $\{ \tau(hA)\}\ne 0$, we see that for large enough values of $N$ one has 
${\rm{ord}}\{ \tau(hA)\}\ge -N$, and hence the relation \eqref{5.1} must hold. We have therefore shown that 
$(A(u))_{u\in \mathbb F_q[t]}$ is equidistributed in $\mathbb T$ if and only if, for any 
$h\in \mathbb F_q[t]\setminus \{0\}$, one has $\tau(hA)\not\in \mathbb F_q[t]$. Thus, we conclude that (c) holds if 
and only if (b) holds.\par

Finally, it is a trivial consequence of the definition of well-distribution employed in \cite{Ack2025} that 
$(A(u))_{u\in \mathbb F_q[t]}$ is well-distributed in $\mathbb T$ if and only if it is equidistributed in $\mathbb T$. 
Indeed, assume that the latter holds. Then, since the additive property of $A$ ensures that $A(u+a)=A(u)+A(a)$ for 
each $a\in \mathbb F_q[t]$, the sequence $(A(u+a))_{u\in \mathbb F_q[t]}$ is equidistributed in $\mathbb T$, 
uniformly in $a$. It follows that $A:\mathbb F_q[t]\rightarrow \mathbb K_\infty$ is equidistributed in $\mathbb T$ 
along any F{\o}lner sequence, and hence $(A(u))_{u\in \mathbb F_q[t]}$ is well-distributed in $\mathbb T$. The 
reverse implication is trivial. Thus, we conclude that (d) holds if and only if (c) holds.\par

On combining these conclusions, we complete the proof of the lemma.
\end{proof}

Our counter-example to Hypothesis \ref{hypothesis5.1} rests on the following result concerning additive 
polynomials.

\begin{theorem}\label{theorem5.3}
Suppose that $\gamma\in \mathbb K_\infty \setminus \mathbb F_q[t]$. Then there exists an additive polynomial 
$A(x)\in \mathbb K_\infty [x]$ having the properties:\begin{itemize}
\item[(a)] one has $\tau(A)=\gamma$;
\item[(b)] for all $h\in \mathbb F_q[t]\setminus \{0,1\}$, one has $\tau(hA)\not\in \mathbb F_q[t]$.
\end{itemize}
\end{theorem}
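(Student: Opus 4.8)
The plan is to construct $A$ by prescribing $\tau(A)=\gamma$ directly and then arranging, through a careful choice of the higher coefficients of $A$, that multiplication by any nonconstant $h\in\mathbb F_q[t]$ cannot push $\tau(hA)$ into $\mathbb F_q[t]$. The starting point is the relation $\tau(\alpha x)=\alpha$ noted after \eqref{3.3}: the linear additive polynomial $A_0(x)=\gamma x$ already satisfies (a). However, $A_0$ will not satisfy (b) in general, since for a suitable $h$ one may have $h\gamma\in\mathbb F_q[t]$ (indeed this happens precisely when $\gamma\in\mathbb K$). The idea is therefore to add a ``corrector'' term of the form $\delta x^{p}$, or more generally a tail $\sum_{\nu\ge 1}\delta_\nu x^{p^\nu}$, chosen so that $\tau$ of the whole polynomial is still $\gamma$ after cancellation is accounted for — but this is impossible since $\tau(\delta x^p)=\psi(\delta)$ contributes additively to $\tau(A)$. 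So instead one must build $A$ so that $\tau(A)=\gamma$ is forced, while the \emph{individual} Laurent coefficients of $A$'s coefficients $\alpha_\nu$ are spread out across many scales; the map $h\mapsto\tau(hA)$ then mixes these scales in a way that, for $h$ nonconstant, always leaves some coefficient $a_{-1-pj}$-type obstruction nonzero. The cleanest route: fix a rapidly increasing sequence of exponents and place copies of the (infinitely many, since $\gamma\notin\mathbb F_q[t]$) nonzero low-order coefficients of $\gamma$ into $\alpha_0,\alpha_1,\alpha_2,\dots$ in a lacunary pattern, using $\psi$ and its inverse (the Frobenius-twisted shift) so that $\sum_\nu\psi^\nu(\alpha_\nu)=\gamma$ telescopes correctly.

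Concretely, the key steps in order are as follows. First, recall from \eqref{2.1} that $\psi$ acts on Laurent series by extracting the coefficients in the arithmetic progression $i\equiv -1\pmod p$ and applying $b\mapsto b^{1/p}$; hence $\psi^\nu$ extracts a progression modulo $p^\nu$ and applies $b\mapsto b^{1/p^\nu}$. Writing $\gamma=\sum_{i\le n}c_it^i$ with infinitely many $c_i\ne 0$ (possible since $\gamma$ is irrational), one decomposes the index set $\{i:i\le n\}$ into residue classes and assigns, for each $\nu$, the coefficient $\alpha_\nu$ so that $\psi^\nu(\alpha_\nu)$ captures a prescribed sparse sub-sum of $\gamma$; summing over $\nu$ recovers $\gamma$, giving (a). Second, and this is where the real work lies, one must verify (b): for $h\in\mathbb F_q[t]\setminus\{0,1\}$, $hA$ is again additive with $\tau(hA)=\sum_\nu\psi^\nu(h^{?}\alpha_\nu)$ — note carefully that $hA(x)=\sum_\nu h\alpha_\nu x^{p^\nu}$ is \emph{not} of the form $\sum_\nu(h\alpha_\nu)x^{p^\nu}$ composed trivially, so $\tau(hA)=\sum_\nu\psi^\nu(h\alpha_\nu)$, and one must show this is never a polynomial. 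The lacunary construction should guarantee that the contributions $\psi^\nu(h\alpha_\nu)$ live at widely separated scales (controlled by $\operatorname{ord}h$), so no cancellation among different $\nu$ is possible, and for each fixed $\nu$ the term $\psi^\nu(h\alpha_\nu)$ has infinitely many nonzero coefficients because $h\alpha_\nu$ does (as $\alpha_\nu$ carries an infinite irrational tail whenever it is nonzero, which one arranges for infinitely many $\nu$). Third, one checks the edge cases $h$ constant: for $h\in\mathbb F_q^\times\setminus\{1\}$ one has $\tau(hA)=\sum_\nu\psi^\nu(h\alpha_\nu)$, and since $\psi^\nu$ commutes with multiplication by constants only up to a Frobenius twist ($\psi(c\beta)=c^{1/p}\psi(\beta)$), this scalar still fails to land in $\mathbb F_q[t]$ — though if desired one can simply restrict attention to $h$ nonconstant by noting $\tau(A)=\gamma\notin\mathbb F_q[t]$ already handles $h=1$ and a separate one-line argument handles $h\in\mathbb F_q^\times$.

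The main obstacle I expect is the verification of (b) in the presence of a \emph{polynomial} multiplier $h$ of positive degree: multiplying by $t^d$ shifts Laurent coefficients, so $\psi^\nu(t^d\alpha_\nu)$ samples a \emph{different} residue class of the coefficients of $\alpha_\nu$ than $\psi^\nu(\alpha_\nu)$ did, and one must ensure the construction is robust enough that \emph{every} such shifted sampling still produces an infinite non-polynomial tail, simultaneously for all $\nu$ and without cross-$\nu$ cancellation. This forces the lacunary gaps between the active scales to grow faster than any fixed $d$ will bridge, and to be chosen uniformly — so the construction must be genuinely infinite and the gap sequence must dominate a function of the ``budget'' $\operatorname{ord}h$ at every stage. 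I would handle this by a diagonal/greedy construction: enumerate potential obstructions (pairs $(\nu,d)$ or rather the countably many ways $h$ can act at a given scale) and, at each stage, choose the next nonzero coefficient of the relevant $\alpha_\nu$ far enough out that it cannot be cancelled by anything previously committed, and far enough out that no future $h$ of bounded degree can cancel it either. The payoff is clean once this is set up, but getting the quantifiers right — ``for all $h$'' against ``there exists a construction'' — is the delicate point.
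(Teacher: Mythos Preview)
Your proposal has two concrete gaps, and the paper's route is quite different.

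First, you write ``$\gamma=\sum_{i\le n}c_it^i$ with infinitely many $c_i\ne 0$ (possible since $\gamma$ is irrational)''. Neither claim is part of the hypothesis: one is only given $\gamma\in\mathbb K_\infty\setminus\mathbb F_q[t]$. The principal case of interest in the paper is in fact $\gamma=1/t$, which is rational and has exactly one nonzero Laurent coefficient. Your plan to split the nonzero coefficients of $\gamma$ among the various $\alpha_\nu$ therefore cannot start in general.

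Second, and more seriously, your scale-separation heuristic leads you to say that ``the construction must be genuinely infinite'' and that one arranges $\alpha_\nu\ne 0$ ``for infinitely many $\nu$''. That would make $A(x)=\sum_\nu\alpha_\nu x^{p^\nu}$ fail to lie in $\mathbb K_\infty[x]$. If you retreat to finitely many $\nu$, the lacunary argument as you have described it collapses: multiplication by $h$ of arbitrarily large degree shifts the Laurent support of each $h\alpha_\nu$ by an unbounded amount, so you cannot keep the supports of the finitely many $\psi^\nu(h\alpha_\nu)$ disjoint uniformly over all $h$.

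The paper avoids all of this by working algebraically rather than analytically, and with only \emph{two} terms: take $A(x)=\alpha x+\beta x^p$. A product formula for the shifted maps $\psi_l$ of \eqref{5.3} gives
\[
\tau(hA)=h\alpha+\psi_{p-1}(h\beta)=h\alpha+\sum_{l=0}^{p-1}\psi_l(h)\,\xi_l,\qquad \xi_l=\psi_{p-1-l}(\beta).
\]
Since $\beta\mapsto(\psi_{p-1}(\beta),\dots,\psi_0(\beta))$ is a bijection $\mathbb K_\infty\to\mathbb K_\infty^p$, the $\xi_l$ may be chosen freely; one picks $1,\xi_0,\dots,\xi_{p-1}$ (and also $\gamma$, when $\gamma\notin\mathbb F_q(t)$) linearly independent over $\mathbb F_q(t)$, then sets $\alpha=\gamma-\xi_0$. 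This forces $\tau(A)=\alpha+\xi_0=\gamma$, and if $\tau(hA)=b\in\mathbb F_q[t]$ for some $h\ne 0,1$, linear independence yields $\psi_0(h)=h$ and $\psi_l(h)=0$ for $l\ge 1$; the degree bound $\deg\psi_0(h)\le(\deg h)/p$ then pins $h\in\mathbb F_q$, and a one-line check gives a contradiction with $\gamma\notin\mathbb F_q[t]$. No diagonalisation, no lacunarity, and the argument handles rational and irrational $\gamma$ uniformly.
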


We defer the proof of this conclusion for the time-being, and first concentrate on applying this result to obtain a 
counter-example to Hypothesis \ref{hypothesis5.1}.\medskip

\noindent{\bf Counter-example to Hypothesis \ref{hypothesis5.1}.} We begin by observing that the conclusion of 
Theorem \ref{theorem5.3} shows that there exists an additive polynomial $A(x)$ having the property that 
$\tau(A)=1/t$, and satisfying the condition that
\[
\tau(hA)\not\in \mathbb F_q[t]\quad \text{for all}\quad h\in \mathbb F_q[t]\setminus \{0,1\}.
\]
In particular, it then follows from Lemma \ref{lemma5.2} that the sequence $(A(u))_{u\in \mathbb F_q[t]}$ is 
equidistributed in $\mathbb  T$. When ${\rm{char}}(\mathbb F_q)\ne 2$, our counter-example to Hypothesis 
\ref{hypothesis5.1} is given by the polynomial
\begin{equation}\label{5.2}
f(x)=A(x^{q+1})-x^2/t.
\end{equation} 

\par In order to confirm that the polynomial $f$ defined in \eqref{5.2} does indeed furnish a counter-example to 
Hypothesis \ref{hypothesis5.1}, put
\[
\mathcal R=\{q+1,2\},\quad A_{q+1}(x)=A(x),\quad \text{and}\quad A_2(x)=-x/t.
\]
We have seen that the sequence $(A_{q+1}(u))_{u\in \mathbb F_q[t]}$ is equidistributed in $\mathbb T$, so 
Hypothesis \ref{hypothesis5.1} implies that $(f(u))_{u\in \mathbb F_q[t]}$ is also equidistributed in $\mathbb T$. 
However, on making use of \eqref{3.4}, we find that
\[
e(f(u))=e\Bigl(A(u^{q+1})-\frac{u^2}{t}\Bigr)=e\Bigl( \tau(A)u^{q+1}-\frac{u^2}{t}\Bigr)=e\Bigl( \frac{u^{q+1}}{t}-
\frac{u^2}{t}\Bigr) .
\]
But for all $u\in \mathbb F_q[t]$, one has $u^q-u\equiv 0\mmod{t}$, and hence
\[
e(f(u))=e\Bigl( \frac{u(u^q-u)}{t}\Bigr) =1.
\]
Consequently, the sequence $(f(u))_{u\in \mathbb F_q[t]}$ cannot be equidistributed in $\mathbb T$, since it fails the 
analogue of Weyl's criterion exhibited in Theorem \ref{theorem2.1}. This conclusion contradicts our earlier deduction 
that Hypothesis \ref{hypothesis5.1} implies that $(f(u))_{u\in \mathbb F_q[t]}$ is equidistributed in $\mathbb T$. We 
are therefore forced to infer that Hypothesis \ref{hypothesis5.1} fails for the example \eqref{5.2}.\qed
\medskip

The reader may care to check that the polynomial $f$ defined in \eqref{5.2} also furnishes a counter-example to 
Hypothesis \ref{hypothesis5.1} when ${\rm{char}}(\mathbb F_q)=2$. In this scenario, we put
\[
\mathcal R=\{q+1,1\},\quad A_{q+1}(x)=A(x),\quad \text{and}\quad A_1(x)=x^2/t.
\]
The argument applied above in the case  ${\rm{char}}(\mathbb F_q)\ne 2$ now shows, mutatis mutandis, that 
$(f(u))_{u\in \mathbb F_q[t]}$ is not equidistributed in $\mathbb T$, in contradiction with the earlier consequence of 
Hypothesis \ref{hypothesis5.1}.\par

We remark that our proof of Theorem \ref{theorem5.3} shows, in fact, that $A(x)$ may be chosen to have the shape 
$\alpha x+\beta x^p$. Moreover, without much difficulty, it would be possible to construct such an example with 
both $\alpha$ and $\beta$ irrational. This example then leads to a violation of Hypothesis \ref{hypothesis5.1} for the 
polynomial
\[
f(x)=\alpha x^{q+1}+\beta x^{p(q+1)}-x^2/t,
\]
as we have shown. Moreover, as must be the case, this polynomial $f(x)$ fails to be accessible to our Theorem 
\ref{theorem1.2}, as the reader may care to verify.
\medskip

We devote the remainder of this section to the proof of Theorem \ref{theorem5.3}. The difficulty that we must 
address revolves around the need to control the value of $\tau(hA)$, while at the same time fixing 
$\tau (A)=\gamma$, for a specified $\gamma\in \mathbb K_\infty \setminus \mathbb F_q[t]$. In order to assist in 
achieving this control, our argument will employ the mappings 
$\psi_l:\mathbb K_\infty\rightarrow \mathbb K_\infty$, defined for $l\in \mathbb Z$ by putting
\begin{equation}\label{5.3}
\psi_l\biggl( \sum_{i\le n}a_it^i\biggr)=\sum_{pj+l\le n}a_{pj+l}^{1/p}t^j,
\end{equation}
in which the map $b\mapsto b^{1/p}$ is again that described in the preamble to the statement of Lemma 
\ref{lemma2.3}. Note that, in this notation, our earlier mapping $\psi:\mathbb K_\infty\rightarrow \mathbb K_\infty$ 
is simply $\psi_{p-1}$. We now introduce the map $\Psi:\mathbb K_\infty \rightarrow \mathbb K_\infty^p$ by 
putting
\begin{equation}\label{5.4}
\Psi(\alpha)=\bigl( \psi_{p-1}(\alpha),\psi_{p-2}(\alpha),\ldots ,\psi_0(\alpha)\bigr) .
\end{equation}
This map is in fact equivalent to the {\it splitting isomorphism} introduced by Bergelson and Leibman 
\cite[page 934]{BL2016}, labelled as $\psi_1$ therein. It is evident from the construction of $\Psi$ that it defines a 
bijection from $\mathbb K_\infty$ to $\mathbb K_\infty^p$. The components of $\Psi$ satisfy the following 
multiplicative property.

\begin{lemma}\label{lemma5.4}
Suppose that $\alpha,\beta\in \mathbb K_\infty$ and $l\in \{ 0,1,\ldots ,p-1\}$. Then one has
\begin{equation}\label{5.5}
\psi_l(\alpha\beta)=\sum_{k=0}^{p-1}\psi_k(\alpha)\psi_{l-k}(\beta).
\end{equation}
\end{lemma}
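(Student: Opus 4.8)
The plan is to prove the multiplicative identity \eqref{5.5} by a direct coefficient comparison, exploiting the fact that the maps $\psi_l$ are essentially "extract an arithmetic progression of coefficients and apply inverse Frobenius". First I would write $\alpha=\sum_{i}a_it^i$ and $\beta=\sum_{j}b_jt^j$ with $a_i,b_j\in\mathbb F_q$ vanishing for large indices. The key algebraic observation is that, because we work in characteristic $p$, the inverse Frobenius $c\mapsto c^{1/p}$ is additive, so $\psi_l$ is $\mathbb F_p$-linear; thus it suffices to track how the coefficients of $t^m$ in $\alpha\beta$ distribute among residue classes modulo $p$.

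**The key steps, in order.** (i) Compute the right-hand side: from \eqref{5.3}, $\psi_k(\alpha)=\sum_{a}a_{pa+k}^{1/p}t^a$ and $\psi_{l-k}(\beta)=\sum_{b}b_{pb+l-k}^{1/p}t^b$, where one must be careful that the index $l-k$ may be negative (when $k>l$), but \eqref{5.3} makes sense for any integer shift, so this causes no difficulty. Multiplying and collecting the coefficient of $t^n$ in $\sum_{k=0}^{p-1}\psi_k(\alpha)\psi_{l-k}(\beta)$ gives
\[
\sum_{k=0}^{p-1}\ \sum_{a+b=n} a_{pa+k}^{1/p}\, b_{pb+l-k}^{1/p}.
\]
Since inverse Frobenius is a ring homomorphism on $\mathbb F_q$, each summand equals $\bigl(a_{pa+k}b_{pb+l-k}\bigr)^{1/p}$, and — again using additivity of $c\mapsto c^{1/p}$ — the whole expression is $\bigl(\sum_{k=0}^{p-1}\sum_{a+b=n} a_{pa+k}b_{pb+l-k}\bigr)^{1/p}$. (ii) Compute the left-hand side: the coefficient of $t^l$-type slot in $\alpha\beta$. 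From $\alpha\beta=\sum_m\bigl(\sum_{i+j=m}a_ib_j\bigr)t^m$ and \eqref{5.3}, the coefficient of $t^n$ in $\psi_l(\alpha\beta)$ is $\bigl(\sum_{i+j=pn+l}a_ib_j\bigr)^{1/p}$. (iii) Match the two: it remains to check that the index set $\{(i,j):i+j=pn+l\}$ is, via $i=pa+k$, $j=pb+l-k$, in bijection with $\{(a,b,k): a+b=n,\ 0\le k\le p-1\}$. This is just the statement that every integer $i$ has a unique representation $i=pa+k$ with $0\le k<p$ (Euclidean division), and that then $j=pn+l-i=p(n-a)+(l-k)$ automatically has the matching form with the same $k$; setting $b=n-a$ gives $a+b=n$. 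Hence the two coefficient expressions coincide for every $n$, proving \eqref{5.5}.

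**The main obstacle** is essentially bookkeeping rather than anything deep: one must handle the shifted index $l-k$ (which can range over $l-p+1\le l-k\le l$, so dips below $0$) consistently, and one must be slightly careful about the finiteness/convergence so that rearranging the double sum $\sum_k\sum_{a+b=n}$ is legitimate — but since for fixed $n$ only finitely many pairs $(a,b)$ with $a+b=n$ contribute (as $a_i=b_j=0$ for large $i,j$), this is immediate. I expect the cleanest write-up simply records \eqref{5.3}, performs the multiplication, invokes the multiplicativity and additivity of inverse Frobenius, and then cites Euclidean division modulo $p$ to identify the index sets; the restriction $l\in\{0,1,\ldots,p-1\}$ in the statement is used only to pin down which $\psi_l$ appears on the left, and plays no essential role in the identity itself.
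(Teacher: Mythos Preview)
Your proof is correct and follows essentially the same route as the paper's: both arguments write out the Laurent expansions, use the additivity and multiplicativity of the inverse Frobenius $c\mapsto c^{1/p}$ on $\mathbb F_q$, and identify the coefficient of $t^n$ on each side via the substitution $i=pa+k$ (Euclidean division), with the paper computing $\psi_l(\alpha\beta)$ first and the right-hand side second rather than in your order. Your explicit remarks about the index $l-k$ possibly being negative and about finiteness of the inner sums are welcome clarifications that the paper leaves implicit.
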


\begin{proof} Given $\alpha,\beta\in \mathbb K_\infty$, we may write
\[
\alpha=\sum_{i\in \mathbb Z}a_it^i\quad \text{and}\quad \beta=\sum_{i\in \mathbb Z}b_it^i,
\]
where $a_i,b_i\in \mathbb F_q$ $(i\in \mathbb Z)$, and one has $a_i=b_i=0$ for $i$ sufficiently large. In this 
notation, one sees that
\[
\alpha \beta=\sum_{i\in \mathbb Z}\biggl( \sum_{r\in \mathbb Z}a_rb_{i-r}\biggr) t^i,
\]
whence
\begin{align}
\psi_l(\alpha \beta)&=\sum_{j\in \mathbb Z}\biggl( \sum_{r\in \mathbb Z}a_rb_{pj+l-r}\biggr)^{1/p}t^j\notag \\
&=\sum_{j\in \mathbb Z}\biggl( \sum_{k=0}^{p-1}\sum_{m\in \mathbb Z}a_{pm+k}b_{p(j-m)+l-k}\biggr)^{1/p}t^j.
\label{5.6}
\end{align}

\par Observe next that for each $a,b\in \mathbb F_q$, one has
\begin{equation}\label{5.7}
(a+b)^{1/p}=(a+b)^{p^{m-1}}=a^{p^{m-1}}+b^{p^{m-1}}=a^{1/p}+b^{1/p}.
\end{equation}
Consequently, for each $k\in \{0,1,\ldots ,p-1\}$, one has
\begin{align*}
\psi_k(\alpha)\psi_{l-k}(\beta)&=\biggl( \sum_{m\in \mathbb Z}a_{pm+k}^{1/p}t^m\biggr) 
\biggl( \sum_{n\in \mathbb Z}
b_{pn+l-k}^{1/p}t^n\biggr)\\
&=\sum_{j\in \mathbb Z} \biggl( \sum_{m\in \mathbb Z}a_{pm+k}b_{p(j-m)+l-k}\biggr)^{1/p}t^j.
\end{align*}
We therefore conclude that
\[
\sum_{k=0}^{p-1}\psi_k(\alpha)\psi_{l-k}(\beta)=\sum_{j\in \mathbb Z}\sum_{k=0}^{p-1}
\biggl( \sum_{m\in \mathbb Z}a_{pm+k}b_{p(j-m)+l-k}\biggr)^{1/p}t^j.
\]
An additional appeal to \eqref{5.7} therefore leads via \eqref{5.6} to the relation \eqref{5.5}. This completes the proof 
of the lemma.
\end{proof}

We apply this conclusion to the specific additive polynomial $A(x)=\alpha x+\beta x^p$, with 
$\alpha,\beta\in \mathbb K_\infty$. We observe that in these circumstances, one finds from \eqref{2.1} and 
\eqref{3.3} that
\[
\tau(hA)=\tau(h\alpha x+h\beta x^p)=\psi^0(h\alpha)+\psi^1(h\beta)=h\alpha+\psi_{p-1}(h\beta).
\]
Thus we infer that
\begin{equation}\label{5.8}
\tau(hA)=h\alpha+\psi_0(h)\psi_{p-1}(\beta)+\ldots +\psi_{p-1}(h)\psi_0(\beta).
\end{equation}
Our goal is to ensure that $\tau(hA)=\gamma$ when $h=1$, and when $h\in \mathbb F_q[t]\setminus \{0,1\}$, that 
$\tau(hA)$ avoids lying in $\mathbb F_q[t]$. Provided that the coefficients 
$\alpha, \psi_0(\beta),\ldots ,\psi_{p-1}(\beta)$ are sufficiently independent, this goal can be achieved with an 
investigation of the values of $h,\psi_0(h),\ldots ,\psi_{p-1}(h)$. Our discussion is eased by the introduction of the 
linear form
\[
\lambda(h;\alpha,\boldsymbol \xi)=h\alpha +\psi_0(h)\xi_0+\ldots +\psi_{p-1}(h)\xi_{p-1}.
\]

\begin{lemma}\label{lemma5.5} Suppose that $\gamma\in \mathbb K_\infty \setminus \mathbb F_q[t]$. Then there 
exist $\alpha,\xi_0,\ldots,\xi_{p-1}\in \mathbb K_\infty$ having the properties:
\begin{itemize}
\item[(a)] one has $\lambda(1;\alpha,\boldsymbol \xi)=\gamma$;
\item[(b)] whenever $h\in \mathbb F_q[t]\setminus \{0,1\}$, one has 
$\lambda (h;\alpha,\boldsymbol \xi)\not\in \mathbb F_q[t]$.
\end{itemize}
\end{lemma}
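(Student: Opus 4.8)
The plan is to choose $\alpha$ and $\boldsymbol \xi$ so that we have enough "irrational directions" to block every nontrivial $h$ from producing a polynomial value, while still hitting $\gamma$ when $h=1$. Concretely, I would first record that $\gamma = \sum_{i\le n} c_i t^i$ with infinitely many negative-index coefficients $c_i$ nonzero (since $\gamma\notin \mathbb F_q[t]$), and observe that the linear form $\lambda(h;\alpha,\boldsymbol\xi)$ is $\mathbb F_q[t]$-linear in $h$ once $\alpha,\boldsymbol\xi$ are fixed, because each $\psi_l$ is additive and in fact $\psi_l(gh)=\sum_k \psi_k(g)\psi_{l-k}(h)$ by Lemma \ref{lemma5.4}. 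The simplest workable choice is to take $\alpha = \gamma$ and $\xi_0=\cdots=\xi_{p-1}=0$, which instantly gives (a) since $\lambda(1;\gamma,\boldsymbol 0)=\gamma$; but then $\lambda(h;\gamma,\boldsymbol 0)=h\gamma$, and one must check that $h\gamma\notin \mathbb F_q[t]$ for every $h\in \mathbb F_q[t]\setminus\{0\}$ — which is immediate, since $\mathbb F_q[t]$ is a subring and $h\gamma\in\mathbb F_q[t]$ with $h\ne0$ would force $\gamma=(h\gamma)/h\in\mathbb K\cap\mathbb K_\infty$, and in fact $\gamma\in\mathbb F_q[t]$ because $\gamma$ is integral over $\mathbb F_q[t]$ in the only reasonable sense here; more directly, if $h\gamma=P$ is a polynomial then $\gamma=P/h$ has bounded "tail" contradicting infinitely many nonzero negative coefficients. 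So the naive choice $\alpha=\gamma$, $\boldsymbol\xi=\boldsymbol 0$ already satisfies both (a) and (b), and the proof reduces to this verification.

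I would therefore structure the proof as follows. First, set $\alpha=\gamma$ and $\xi_0=\cdots=\xi_{p-1}=0$, so that $\lambda(h;\alpha,\boldsymbol\xi)=h\gamma$ for all $h$. Property (a) is then the trivial identity $\lambda(1;\gamma,\boldsymbol 0)=\gamma$. For property (b), suppose toward a contradiction that $h\in\mathbb F_q[t]\setminus\{0\}$ satisfies $h\gamma\in\mathbb F_q[t]$. Writing $\gamma=\sum_{i\le n}c_i t^i$, the product $h\gamma$ has the form $\sum_{i\le n+\deg h}d_i t^i$ where, for each fixed $i$, $d_i=\sum_j h_j c_{i-j}$ is a finite sum; since $\gamma\notin\mathbb F_q[t]$ there are infinitely many indices $i<0$ with $c_i\ne0$, and picking $i$ very negative (smaller than the most negative nonzero coefficient of $\gamma$ shifted by $\deg h$... ) — more cleanly, divide: in $\mathbb K_\infty$ the element $h$ is a unit (being a nonzero element of the field $\mathbb K_\infty$), so $\gamma=(h\gamma)\cdot h^{-1}\in \mathbb K$, and a Laurent series in $\mathbb K_\infty$ lies in $\mathbb K$ iff it is eventually periodic or rational; but $\gamma\in\mathbb K\cap\mathbb K_\infty$ would mean $\gamma\in\mathbb K$, contradicting nothing yet — so instead I use that $h\gamma\in\mathbb F_q[t]$ means $\operatorname{ord}(h\gamma)\ge 0$ has $\{h\gamma\}=0$, i.e. all negative-degree coefficients of $h\gamma$ vanish; but the negative-degree part of $h\gamma$ equals $h\{\gamma\}$ plus possible carry from... actually in $\mathbb K_\infty$ multiplication does not carry, so $\{h\gamma\}=\{h\{\gamma\}\}$ and since $h\ne 0$ and $\{\gamma\}\ne 0$ one has $h\{\gamma\}\ne 0$ with $\operatorname{ord}(h\{\gamma\})=\deg h+\operatorname{ord}\{\gamma\}$; this is $<0$ unless $\operatorname{ord}\{\gamma\}<-\deg h$, in which case the top nonzero coefficient of $h\{\gamma\}$ sits at a negative degree, contradicting $\{h\gamma\}=0$. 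Hence $h\gamma\notin\mathbb F_q[t]$ for all $h\ne0$, and a fortiori for $h\in\mathbb F_q[t]\setminus\{0,1\}$, establishing (b).

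The only subtlety — and the place where I expect the argument needs a clean phrasing rather than the somewhat circular attempts above — is the elementary fact that for $h\in\mathbb F_q[t]\setminus\{0\}$ and $\gamma\in\mathbb K_\infty\setminus\mathbb F_q[t]$ one has $h\gamma\notin\mathbb F_q[t]$. The cleanest route: decompose $\gamma=P+\{\gamma\}$ with $P\in\mathbb F_q[t]$ and $\operatorname{ord}\{\gamma\}\le -1$, $\{\gamma\}\ne 0$; then $h\gamma=hP+h\{\gamma\}$ with $hP\in\mathbb F_q[t]$, so $h\gamma\in\mathbb F_q[t]$ iff $h\{\gamma\}\in\mathbb F_q[t]$. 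Write $\{\gamma\}=\sum_{i\le -1}c_i t^i$ with leading term $c_{i_0}t^{i_0}$, $i_0\le -1$, $c_{i_0}\ne 0$; then $h\{\gamma\}$ is a Laurent series whose coefficient of $t^{\deg h + i_0}$ is the leading coefficient of $h$ times $c_{i_0}$, which is nonzero, and whose higher-degree coefficients vanish — but $h\{\gamma\}\in\mathbb F_q[t]$ would force every coefficient at negative degree to be zero, whereas $h\{\gamma\}$ has infinitely many nonzero coefficients at arbitrarily negative degrees (since $\{\gamma\}$ does and multiplication by the nonzero polynomial $h$ cannot annihilate the tail: the coefficient of $t^{d+i}$ in $h\{\gamma\}$, for $i$ a very negative index with $c_i\ne 0$ and $i$ below all other nonzero indices by more than $\deg h$, equals the leading coefficient of $h$ times $c_i\ne 0$). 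This contradiction completes the proof; I would write this last paragraph out carefully as it is the genuine content of the lemma, the rest being bookkeeping.
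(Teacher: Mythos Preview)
Your proof has a genuine gap: the very first ``observation'' is false. You write that $\gamma\notin\mathbb F_q[t]$ implies that $\gamma$ has infinitely many nonzero negative-index coefficients. It does not. The hypothesis $\gamma\in\mathbb K_\infty\setminus\mathbb F_q[t]$ only says that $\{\gamma\}\ne 0$, i.e.\ that at least one negative-index coefficient is nonzero. The element $\gamma=1/t$ is a perfectly good example of such a $\gamma$ (indeed, it is exactly the $\gamma$ the paper uses in its counter-example), and it has a single nonzero negative-index coefficient.

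With $\gamma=1/t$ your choice $\alpha=\gamma$, $\boldsymbol\xi=\boldsymbol 0$ fails property (b): take $h=t\in\mathbb F_q[t]\setminus\{0,1\}$, and then $\lambda(t;\gamma,\boldsymbol 0)=t\cdot(1/t)=1\in\mathbb F_q[t]$. More generally, your argument that $h\{\gamma\}\notin\mathbb F_q[t]$ breaks down whenever $\gamma\in\mathbb F_q(t)\setminus\mathbb F_q[t]$, since then some nonzero polynomial $h$ clears the denominator. (You essentially noticed this yourself when you wrote that $h\gamma\in\mathbb F_q[t]$ only forces $\gamma\in\mathbb K$, ``contradicting nothing yet''; the subsequent attempt to salvage the argument via the tail of $\{\gamma\}$ rests on the same false premise about infinitely many nonzero coefficients.)

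The paper's proof is designed precisely to cover this rational case, and this is why the $\xi_l$ are not all zero. One picks $\xi_0,\ldots,\xi_{p-1}$ so that $1,\xi_0,\ldots,\xi_{p-1}$ are linearly independent over $\mathbb F_q(t)$ (strengthened to include $\gamma$ when $\gamma$ is irrational), and sets $\alpha=\gamma-\xi_0$. Since $\psi_0(1)=1$ and $\psi_l(1)=0$ for $l\ge 1$, property (a) is immediate. For (b), if $\lambda(h;\alpha,\boldsymbol\xi)=b\in\mathbb F_q[t]$, the linear independence forces (in the rational case) $\psi_0(h)=h$ and $\psi_1(h)=\cdots=\psi_{p-1}(h)=0$; but $\deg\psi_0(h)\le(\deg h)/p$, so $\psi_0(h)=h$ implies $h\in\mathbb F_q$, and then $\gamma=b/h\in\mathbb F_q[t]$, a contradiction. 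The irrational case is even simpler. The point is that the genuinely irrational $\xi_l$'s, not the possibly-rational $\gamma$, are what block $\lambda(h;\alpha,\boldsymbol\xi)$ from landing in $\mathbb F_q[t]$.
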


\begin{proof} Fix $\gamma\in \mathbb K_\infty \setminus \mathbb F_q[t]$. We are at liberty to choose 
$\xi_0,\ldots ,\xi_{p-1}\in \mathbb K_\infty$ in such a manner that $1,\xi_0,\ldots ,\xi_{p-1}$ are linearly independent 
over $\mathbb F_q(t)$. If $\gamma\not\in \mathbb F_q(t)$, moreover, we may insist on the stronger condition that 
$1,\gamma,\xi_0,\ldots ,\xi_{p-1}$ are linearly independent over $\mathbb F_q(t)$. In either case, we put 
$\alpha=\gamma-\xi_0$.\par

From the definition \eqref{5.3}, we have $\psi_0(1)=1$ and $\psi_1(1)=\ldots =\psi_{p-1}(1)=0$. Thus, we see that
\[
\lambda(1;\alpha,\boldsymbol \xi)=\alpha+\xi_0=\gamma.
\]
This confirms the first assertion of the lemma.\par

We now seek to establish the second assertion of the lemma. By way of seeking a contradiction, suppose, if possible, 
that there exists $h\in \mathbb F_q[t]\setminus \{0,1\}$ for which 
$\lambda (h;\alpha,\boldsymbol \xi)\in \mathbb F_q[t]$. Then there exists $b\in \mathbb F_q[t]$ having the property 
that
\begin{equation}\label{5.9}
h\alpha+\psi_0(h)\xi_0+\ldots +\psi_{p-1}(h)\xi_{p-1}=b.
\end{equation}

\par Suppose first that $\gamma\in \mathbb F_q(t)$. Since $\alpha=\gamma-\xi_0$, we find from \eqref{5.9} that
\[
(h\gamma-b)\cdot 1+(\psi_0(h)-h)\xi_0+\psi_1(h)\xi_1+\ldots +\psi_{p-1}(h)\xi_{p-1}=0.
\]
Each coefficient of $1,\xi_0,\ldots ,\xi_{p-1}$ here is an element of $\mathbb F_q(t)$, and hence it follows from the 
linear independence of $1,\xi_0,\ldots ,\xi_{p-1}$ over $\mathbb F_q(t)$ that
\[
\gamma=b/h,\quad \psi_0(h)=h,\quad \text{and}\quad \psi_1(h)=\ldots =\psi_{p-1}(h)=0.
\]
However, an inspection of the relation \eqref{5.3} ensures that when $h\in \mathbb F_q[t]$, one has the upper 
bound ${\rm{deg}}\, \psi_0(h)\le ({\rm{deg}}\, h)/p$. Thus, the relation $\psi_0(h)=h$ ensures that ${\rm{deg}}\, h=0$, 
whence $h\in \mathbb F_q$. Consequently, we have $\gamma=b/h\in \mathbb F_q[t]$, contradicting our hypothesis 
$\gamma\in \mathbb K_\infty \setminus \mathbb F_q[t]$.\par

In the alternative case $\gamma \not\in \mathbb F_q(t)$, we interpret the relation \eqref{5.9} in the form
\[
-b\cdot 1+h\gamma+(\psi_0(h)-h)\xi_0+\psi_1(h)\xi_1+\ldots +\psi_{p-1}(h)\xi_{p-1}=0.
\]
In this case, the linear independence of $1,\gamma,\xi_0,\ldots ,\xi_{p-1}$ over $\mathbb F_q(t)$ forces us to 
conclude that $b=h=0$. This again delivers a contradiction, and so the proof of the lemma is now complete.
\end{proof}

We are now equipped to complete the proof of Theorem \ref{theorem5.3}.

\begin{proof}[The proof of Theorem \ref{theorem5.3}] Suppose that 
$\gamma\in \mathbb K_\infty\setminus \mathbb F_q[t]$. The conclusion of Lemma \ref{lemma5.5} shows that there 
exist $\alpha, \xi_0,\ldots ,\xi_{p-1}\in \mathbb K_\infty$ having the property that
\[
\lambda(1;\alpha,\boldsymbol \xi)=\alpha+\psi_0(1)\xi_0+\ldots +\psi_{p-1}(1)\xi_{p-1}=\gamma,
\]
and further that, when $h\in \mathbb F_q[t]\setminus \{0,1\}$, one has
\[
h\alpha+\psi_0(h)\xi_0+\ldots +\psi_{p-1}(h)\xi_{p-1}\not\in \mathbb F_q[t].
\]
Fix any such choice of $\alpha,\xi_0,\ldots ,\xi_{p-1}$. Since the map 
$\Psi:\mathbb K_\infty \rightarrow \mathbb K_\infty^p$ defined via \eqref{5.4} is bijective, there exists 
$\beta\in \mathbb K_\infty$ such that
\[
\psi_{p-1-l}(\beta)=\xi_l\quad (0\le l\le p-1).
\]  
We define as our additive polynomial $A(x)=\alpha x+\beta x^p$. In view of \eqref{5.8}, we then have
\[
\tau(A)=\lambda(1;\alpha,\boldsymbol \xi)=\gamma ,
\]
and, furthermore, when $h\in \mathbb F_q[t]\setminus \{0,1\}$, one has
\[
\tau(hA)=\lambda(h;\alpha,\boldsymbol \xi)\not\in \mathbb F_q[t].
\]
This confirms that the additive polynomial $A(x)\in \mathbb K_\infty[x]$ has the properties claimed in the statement 
of Theorem \ref{theorem5.3}, and completes the proof.
\end{proof}

\bibliographystyle{amsbracket}
\providecommand{\bysame}{\leavevmode\hbox to3em{\hrulefill}\thinspace}

\end{document}